\documentclass[a4paper]{amsart}
\usepackage{amsmath,amsthm,amssymb,latexsym,epic,bbm,comment}
\usepackage{graphicx,enumerate,stmaryrd}
\usepackage[all,2cell]{xy}
\xyoption{2cell}

\newtheorem{theorem}{Theorem}
\newtheorem{lemma}[theorem]{Lemma}

\newtheorem{proposition}[theorem]{Proposition}

\usepackage[all]{xy}
\usepackage[active]{srcltx}
\usepackage[parfill]{parskip}
\usepackage{enumerate}
\numberwithin{equation}{section}

\font\sc=rsfs10

\newcommand{\cC}{\sc\mbox{C}\hspace{1.0pt}}

\newcommand{\cS}{\sc\mbox{S}\hspace{1.0pt}}

\newcommand{\cA}{\sc\mbox{A}\hspace{1.0pt}}
\newcommand{\cB}{\sc\mbox{B}\hspace{1.0pt}}

\font\scc=rsfs7
\newcommand{\ccC}{\scc\mbox{C}\hspace{1.0pt}}

\newcommand{\ccA}{\scc\mbox{A}\hspace{1.0pt}}
\newcommand{\ccB}{\scc\mbox{B}\hspace{1.0pt}}

\newcommand{\ccS}{\scc\mbox{S}\hspace{1.0pt}}

\font\sccc=rsfs5
\newcommand{\cccA}{\sccc\mbox{A}\hspace{1.0pt}}

\begin{document}

\title[Fiat categorification of $IS_n$ and $F^*_n$]
{Fiat categorification of the symmetric inverse semigroup $IS_n$ and the semigroup $F^*_n$}
\author{Paul Martin and Volodymyr Mazorchuk}
%\date{\today}

\begin{abstract}
Starting from the symmetric group $S_n$, we construct two
fiat $2$-categories. One of them can be viewed as the fiat
``extension'' of the natural $2$-category associated with the 
symmetric inverse semigroup (considered as an ordered semigroup
with respect to the natural order). This $2$-category provides
a fiat categorification for the integral semigroup algebra of 
the symmetric inverse semigroup. The other $2$-category can be viewed 
as the fiat ``extension'' of the $2$-category associated with the 
maximal factorizable subsemigroup of the dual symmetric inverse 
semigroup (again, considered as an ordered semigroup
with respect to the natural order). This $2$-category provides
a fiat categorification for the integral semigroup algebra of 
the maximal factorizable subsemigroup of the dual symmetric inverse 
semigroup. 
\end{abstract}
\maketitle

\section{Introduction and description of the results}\label{s1}

Abstract higher representation theory has its origins in the papers \cite{BFK,CR,Ro1,Ro2}
with principal motivation coming from \cite{Kh,St}.
For finitary $2$-categories, basics of $2$-representation theory were developed in \cite{MM1,MM2,MM3,MM4,MM5,MM6}
and further investigated in \cite{GrMa1,GrMa,Xa,Zh1,Zh2,Zi,CM,MZ,MaMa,KMMZ}, 
see also \cite{KiMa} for applications. For different ideas on higher representation theory, see also
\cite{ForresterBarker,Baez,Elgueta,Pfeiffer,Kapranov} and references therein.

The major emphasis in \cite{MM1,MM2,MM3,MM4,MM5,MM6} is on the study of so-called {\em fiat}
$2$-categories, which are $2$-categorical analogues of finite dimensional algebras with involution.
Fiat $2$-categories appear naturally both in topology and representation theory.
They have many nice properties and the series of papers mentioned above develops an
essential starting part of $2$-representation theory for fiat categories.

Many examples of $2$-categories appear naturally in semigroup theory, see \cite{KuMa,GrMa1,GrMa,Fo}.
The easiest example is the $2$-category associated to a monoid with a fixed admissible partial order,
see Subsection~\ref{s4.0} for details. Linear analogues of these $2$-categories show up
naturally in representation theory, see \cite{GrMa1,GrMa}. A classical example of an ordered monoid
is an inverse monoid with respect to the natural partial order. There is a standard linearization procedure,
which allows one to turn a $2$-category of a finite ordered monoid into a finitary $2$-category,
see Subsection~\ref{s3.2} for details.

One serious disadvantage with linearizations of $2$-categories associated to finite ordered monoids is the
fact that they are almost never fiat. The main reason for that is lack of $2$-morphisms which start from the
identity $1$-morphism. In the present paper we construct two natural ``extensions'' of the symmetric group
to $2$-categories whose linearizations are fiat. One of them becomes a nice $2$-categorical analogue 
(categorification) for the symmetric inverse semigroup $IS_n$. The other one becomes a nice $2$-categorical 
analogue for the maximal factorizable subsemigroup  $F^*_n$ in the dual symmetric inverse semigroup $I^*_n$.

The main novel component of the present paper is in the definitions and constructions of the
main objects. To construct our $2$-categories, we, essentially, have to define three things:
\begin{itemize}
\item sets of $2$-morphisms between elements of $S_n$;
\item horizontal composition of $2$-morphisms;
\item vertical composition of $2$-morphisms.
\end{itemize}
In the case which eventually leads to $IS_n$, we view elements of $S_n$ as binary relations in the
obvious way and define $2$-morphisms between two elements of $S_n$ as the set of all binary relations
contained in both these elements. We chose vertical composition to be given by intersection of
relations and horizontal composition to be given by the usual composition of relations. Although all these
choices are rather natural, none of them seems to be totally obvious. Verification that this indeed
defines a $2$-category requires some technical work. In the case which eventually leads to $F^*_n$,
we do a similar thing, but instead of binary relations, we realize $S_n$ inside the partition monoid.
For $2$-morphisms between elements $\sigma$ and $\tau$ in $S_n$, we use those partitions which contain 
both $\sigma$ and $\tau$. All details on both constructions and all verifications can be found in Section~\ref{s2}.

Section~\ref{s3} recalls the theory of $\Bbbk$-linear $2$-categories  and gives
explicit constructions for a finitary $\Bbbk$-linear $2$-category starting
from a finite $2$-category. In Section~\ref{s4} we establish that our constructions
lead to fiat $2$-categories. We also recall, in more details, the standard constructions
of finitary $2$-categories, starting from $IS_n$ and $F^*_n$, considered as ordered monoids,
and show that the $2$-categories obtained in this way are not fiat. In Section~\ref{s5}
we make the relation between our constructions and $IS_n$ and $F^*_n$ precise. In fact, 
we show that the decategorification of our first construction is isomorphic to the semigroup algebra
$\mathbb{Z}[IS_n]$, with respect to the so-called M{\"o}bius basis in $\mathbb{Z}[IS_n]$, cf. \cite[Theorem~4.4]{Ste}.
Similarly, we show that the decategorification of our second construction is isomorphic to the semigroup algebra
$\mathbb{Z}[F^*_n]$, with respect to a similarly defined basis. We complete the paper with two explicit
examples in Section~\ref{s6}.
\vspace{5mm}

{\bf Acknowledgment.} The main part of this research was done during thew visit of the second author
to University of Leeds in October 2014. Financial support of EPSRC and hospitality of University of Leeds
are gratefully acknowledged. The first author is partially supported by EPSRC under grant EP/I038683/1.
The second author is partially supported by the Swedish Research Council and 
G{\"o}ran Gustafsson Foundation. We thank Stuart Margolis for stimulating discussions.

\section{Two $2$-categorical ``extensions'' of $S_n$}\label{s2}

\subsection{$2$-categories}\label{s2.1}
A {\em $2$-category} is a category enriched over the monoidal category $\mathbf{Cat}$ of small categories. This
means that a $2$-category $\cC$ consists of 
\begin{itemize}
\item objects $\mathtt{i},\mathtt{j},\dots$;
\item small morphism categories $\cC(\mathtt{i},\mathtt{j})$;
\item bifunctorial compositions;
\item identity objects $\mathbbm{1}_{\mathtt{i}}\in \cC(\mathtt{i},\mathtt{i})$;
\end{itemize}
which satisfy the obvious collection of (strict) axioms. Objects in morphism categories are usually called
{\em $1$-morphisms} (for example, all $\mathbbm{1}_{\mathtt{i}}$ are $1$-morphisms)
while morphisms in morphism categories are usually called {\em $2$-morphisms}.
Composition of  $2$-morphisms inside a fixed $\cC(\mathtt{i},\mathtt{j})$ is called {\em vertical}
and denoted $\circ_1$. Composition of  $2$-morphisms coming from the bifunctorial composition in 
$\cC$ is called {\em horizontal} and denoted $\circ_0$.
We refer the reader to \cite{Mac,Le} for more details on $2$-categories.

The main example of a $2$-category is  $\mathbf{Cat}$ itself, where
\begin{itemize}
\item objects are small categories;
\item $1$-morphisms are functors;
\item $2$-morphisms are natural transformations;
\item composition is the usual composition;
\item identity $1$-morphisms are identity functors.
\end{itemize}

\subsection{First $2$-category extending $S_n$}\label{s2.3}

For $n\in\mathbb{N}:=\{1,2,3,\dots\}$, consider the set $\mathbf{n}=\{1,2,\dots,n\}$ and let $S_n$ denote the 
{\em symmetric group} of all bijective transformations of $\mathbf{n}$ under composition. We consider also
the monoid $\mathbf{B}_n=2^{\mathbf{n}\times \mathbf{n}}$ of all {\em binary relations} on $\mathbf{n}$ which
is identified with the monoid of {\em $n\times n$-matrices} over the Boolean semiring $\mathbf{B}:=\{0,1\}$
by taking a relation to its adjacency matrix. Note that $\mathbf{B}_n$ is an ordered monoid with 
respect to usual inclusions of binary
relations. We identify $S_n$ with the group of invertible elements in $\mathbf{B}_n$ in the obvious way.

We now define a $2$-category $\cA=\cA_n$. To start with, we declare that 
\begin{itemize}
\item $\cA$ has one object $\mathtt{i}$;
\item $1$-morphisms in $\cA$ are elements in $S_n$;
\item composition $\cdot$ of $1$-morphisms is induced from $S_n$;
\item the identity $1$-morphism is the identity transformation $\mathrm{id}_{\mathbf{n}}\in S_n$.
\end{itemize}
It remains to define $2$-morphisms in $\cA$ and their compositions.
\begin{itemize}
\item For $\pi,\sigma\in S_n$, we define $\mathrm{Hom}_{\ccA}(\pi,\sigma)$ 
as the set of all $\alpha\in \mathbf{B}_n$ such that $\alpha\subseteq \pi\cap \sigma$.
\item For $\pi,\sigma,\tau\in S_n$, and also for $\alpha\in\mathrm{Hom}_{\ccA}(\pi,\sigma)$ and
$\beta\in \mathrm{Hom}_{\ccA}(\sigma,\tau)$, we define $\beta\circ_1\alpha:=\beta\cap \alpha$.
\item For $\pi\in S_n$, we define the identity element in 
$\mathrm{Hom}_{\ccA}(\pi,\pi)$ to be $\pi$.
\item For $\pi,\sigma,\tau,\rho\in S_n$, and also for $\alpha\in\mathrm{Hom}_{\ccA}(\pi,\sigma)$ and
$\beta\in\mathrm{Hom}_{\ccA}(\tau,\rho)$, we define $\beta\circ_0\alpha:=\beta\alpha$, the
usual composition of binary relations.
\end{itemize}

\begin{proposition}\label{prop1}
The construct $\cA$ above is a $2$-category.
\end{proposition}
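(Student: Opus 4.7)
The plan is to verify the $2$-category axioms in the standard order: first that $\circ_1$ and $\circ_0$ are well-defined, then that each hom-collection with $\circ_1$ forms a category, then that $\circ_0$ is associative and unital, and finally that the interchange law holds. Most of these reduce immediately to elementary facts about $\cap$ and composition of binary relations, so the genuine content sits in the last check.

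First I would observe that both $\cap$ and composition of relations are monotone under inclusion, which gives well-definedness for free: if $\alpha\subseteq\pi\cap\sigma$ and $\beta\subseteq\sigma\cap\tau$ then $\beta\cap\alpha\subseteq\pi\cap\tau$, and if in addition $\beta\subseteq\mu\cap\nu$ (with the obvious re-labelling for horizontal data) then $\beta\alpha\subseteq\mu\pi\cap\nu\sigma$. For each fixed pair $(\pi,\sigma)$ the operation $\circ_1=\cap$ is associative, and the element $\pi$ acts as an identity on $\mathrm{Hom}_{\ccA}(\pi,\sigma)$ because any $\alpha$ there satisfies $\alpha\subseteq\pi$, hence $\alpha\cap\pi=\alpha$; this gives the category structure on each $\cA(\mathtt{i},\mathtt{i})$. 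Horizontal associativity and the fact that the identity relation on $\mathbf{n}$ is a two-sided unit for composition are inherited from the monoid $\mathbf{B}_n$, and clearly $\mathrm{id}_\sigma\circ_0\mathrm{id}_\pi=\sigma\pi=\mathrm{id}_{\sigma\pi}$.

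The real work is the interchange law: for horizontally/vertically composable $2$-morphisms one must show
\[(\beta'\alpha')\cap(\beta\alpha)=(\beta'\cap\beta)(\alpha'\cap\alpha).\]
This identity is false for arbitrary binary relations, so the argument must use the crucial structural input that every $2$-morphism sits inside a permutation. I would exploit this by parametrising: if $\alpha,\alpha'\subseteq\pi\cap\sigma$ then $\alpha=\{(i,\pi(i)):i\in A\}$ and $\alpha'=\{(i,\pi(i)):i\in A'\}$ for unique subsets $A,A'$ of $\{i:\pi(i)=\sigma(i)\}$, and analogously $\beta,\beta'$ are determined by subsets $B,B'$. Injectivity of the permutations makes the composite $\beta\alpha$ again the graph of a partial bijection, indexed by $A\cap\pi^{-1}(B)$, and the same for $\beta'\alpha'$; intersecting two graphs of (restrictions of) the same bijection $\tau\pi$ just intersects their index sets.

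Carrying out this bookkeeping, both sides of the interchange identity reduce to $\{(x,\tau\pi(x)):x\in(A\cap A')\cap\pi^{-1}(B\cap B')\}$, using that $\pi=\sigma$ on $A\cup A'$ and $\tau=\rho$ on $B\cup B'$ to identify $\sigma^{-1}(B')$ with $\pi^{-1}(B')$ on $A\cap A'$. The main obstacle is precisely this step: without the permutation hypothesis the two sides can genuinely differ, and the role of the whole setup (taking $1$-morphisms to be elements of $S_n$ rather than of $\mathbf{B}_n$) is exactly what forces the interchange. Once the interchange law is established, $\cA$ satisfies all the strict $2$-category axioms and the proposition follows.
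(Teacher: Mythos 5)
Your proof is correct, and for the key step (the interchange law) it takes a genuinely different route from the paper's. The paper first verifies the interchange identity in the special case $\sigma=\mu=\mathrm{id}_{\mathbf{n}}$, where all four $2$-morphisms are subrelations of the diagonal and composition of relations coincides with intersection, so both sides collapse to $\alpha\cap\beta\cap\gamma\cap\delta$; it then reduces the general case to this one via two auxiliary lemmas stating that left and right composition with a permutation induce bijections of hom-sets that commute with $\circ_1$. You instead parametrise each $2$-morphism as the graph of the restriction of a permutation to a subset of the coincidence set of its source and target, and compute both sides of the interchange identity directly, arriving at $\{(x,\tau\pi(x)):x\in (A\cap A')\cap\pi^{-1}(B\cap B')\}$ for both; I checked this and it is right. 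Your computation is self-contained and makes explicit exactly where the hypothesis that $1$-morphisms lie in $S_n$ enters, whereas the paper's reduction is shorter and transfers verbatim to the partition-monoid analogue $\cB_n$, where a direct parametrisation would be considerably messier. One small imprecision in your sketch: with your labels $\alpha\subseteq\pi\cap\sigma$ and $\alpha'\subseteq\sigma\cap\rho$, the correct statement is that $\pi=\sigma$ on $A$ and $\sigma=\rho$ on $A'$ (hence $\pi=\sigma=\rho$ on $A\cap A'$), not that $\pi=\sigma$ on all of $A\cup A'$; this does not affect the final identity, since only $A\cap A'$ ever enters the computation.
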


\begin{proof}
Composition $\cdot$ of $1$-morphisms is associative as $S_n$ is a group.
The vertical composition  $\circ_1$  is clearly well-defined. It is associative as $\cap$ is associative.
If we have $\alpha\in\mathrm{Hom}_{\ccA}(\pi,\sigma)$ or $\alpha\in\mathrm{Hom}_{\ccA}(\sigma,\pi)$, then 
$\alpha\subseteq \pi$ and thus $\alpha\cap\pi =\alpha$. Therefore $\pi\in \mathrm{Hom}_{\ccA}(\pi,\pi)$ 
is the identity element.
 
Let us check that the horizontal composition $\circ_0$ is well-defined. From $\alpha\subseteq \pi$ and 
$\beta\subseteq\tau$ and the fact that $\mathbf{B}_n$ is ordered, we have $\beta\alpha\subseteq \tau\alpha
\subseteq \tau\pi$. Similarly, from $\alpha\subseteq \sigma$ and 
$\beta\subseteq\rho$ and the fact that $\mathbf{B}_n$ is ordered, we have $\beta\alpha\subseteq \rho\alpha
\subseteq \rho\sigma$. It follows that $\beta\alpha\in \mathrm{Hom}_{\ccA}(\tau\pi,\rho\sigma)$ and thus
$\circ_0$   is well-defined. Its associativity follows from the fact that usual composition of binary relations
is associative.

It remains to check the {\em interchange law}, that is the fact that, for any $1$-morphisms
$\pi,$ $\sigma,$ $\rho,$ $\tau,$ $\mu,$ $\nu$ and for any 
$\alpha\in \mathrm{Hom}_{\ccA}(\pi,\sigma)$, $\beta\in \mathrm{Hom}_{\ccA}(\tau,\mu)$,
$\gamma\in \mathrm{Hom}_{\ccA}(\sigma,\rho)$ and $\delta\in \mathrm{Hom}_{\ccA}(\mu,\nu)$, we have
\begin{equation}\label{eq1}
(\delta\circ_0\gamma)\circ_1(\beta\circ_0\alpha)=(\delta\circ_1\beta)\circ_0(\gamma\circ_1\alpha).
\end{equation}
Assume first that 
$\sigma=\mu=\mathrm{id}_{\mathbf{n}}$. In this case both $\alpha$, $\beta$, $\gamma$ and $\delta$
are subrelations of the identity relation $\mathrm{id}_{\mathbf{n}}$. Note that, given two subrelations
$x$ and $y$ of the identity relation $\mathrm{id}_{\mathbf{n}}$, their product $xy$ as binary relations
equals $x\cap y$. Hence, in this particular case, both sides of \eqref{eq1} are equal to 
$\alpha\cap\beta\cap\gamma\cap\delta$.  

Before proving the general case, we will need the following two lemmata:

\begin{lemma}\label{lem2}
Let $\pi,\sigma,\tau,\rho\in S_n$. 
\begin{enumerate}[$($i$)$]
\item\label{lem2.1} Left composition with $\pi$ induces a bijection from 
$\mathrm{Hom}_{\ccA}(\sigma,\tau)$ to $\mathrm{Hom}_{\ccA}(\pi\sigma,\pi\tau)$.
\item\label{lem2.2} For any $\alpha\in \mathrm{Hom}_{\ccA}(\sigma,\tau)$ and
$\beta\in \mathrm{Hom}_{\ccA}(\tau,\rho)$, we have
\begin{displaymath}
\pi\circ_0(\beta\circ_1\alpha)=(\pi\circ_0\beta)\circ_1(\pi\circ_0\alpha).
\end{displaymath}
\end{enumerate}
\end{lemma}

\begin{proof}
Left composition with $\pi$ maps an element $(y,x)$ of $\alpha\in \mathrm{Hom}_{\ccA}(\sigma,\tau)$ 
to $(\pi(y),x)\in \mathrm{Hom}_{\ccA}(\pi\sigma,\pi\tau)$. As $\pi$ is an invertible transformation 
of $\mathbf{n}$, multiplying with  $\pi^{-1}$ returns $(\pi(y),x)$ to $(y,x)$. This implies
claim~\eqref{lem2.1}. Claim~\eqref{lem2.2} follows from claim~\eqref{lem2.1} and the observation
that composition with invertible maps commutes with taking intersections.
\end{proof}

\begin{lemma}\label{lem3}
Let $\pi,\sigma,\tau,\rho\in S_n$. 
\begin{enumerate}[$($i$)$]
\item\label{lem3.1} Right composition with $\pi$ induces a bijection from 
$\mathrm{Hom}_{\ccA}(\sigma,\tau)$ to $\mathrm{Hom}_{\ccA}(\sigma\pi,\tau\pi)$.
\item\label{lem3.2} For any $\alpha\in \mathrm{Hom}_{\ccA}(\sigma,\tau)$ and
$\beta\in \mathrm{Hom}_{\ccA}(\tau,\rho)$, we have 
\begin{displaymath}
(\beta\circ_1\alpha)\circ_0\pi=(\beta\circ_0\pi)\circ_1(\alpha\circ_0\pi). 
\end{displaymath}
\end{enumerate}
\end{lemma}

\begin{proof}
Analogous to the proof of Lemma~\ref{lem2}.
\end{proof}

Using Lemmata~\ref{lem2} and \ref{lem3} together with associativity of $\circ_0$, 
right multiplication with $\sigma^{-1}$ and left multiplication with $\mu^{-1}$ reduces
the general case of \eqref{eq1} to the case $\sigma=\mu=\mathrm{id}_{\mathbf{n}}$ considered above.
This completes the proof.
\end{proof}

\subsection{Second $2$-category extending $S_n$}\label{s2.4}

For $n\in\mathbb{N}$, consider the corresponding {\em partition semigroup} $\mathbf{P}_n$,
see \cite{Jo,Mar1,Mar2,Maz}. Elements of $\mathbf{P}_n$ are partitions of the set 
\begin{displaymath}
\overline{\mathbf{n}}:=\{1,2,\dots,n,1',2',\dots,n'\} 
\end{displaymath}
into disjoint unions of non-empty subsets, called {\em parts}. Alternatively, one can view
elements of $\mathbf{P}_n$ as equivalence relations on $\overline{\mathbf{n}}$. Multiplication 
$(\rho,\pi)\mapsto \rho\pi$ in $\mathbf{P}_n$ is 
given by the following {\em mini-max algorithm}, see \cite{Jo,Mar1,Mar2,Maz} for details:
\begin{itemize}
\item Consider $\rho$ as a partition of
$\{1',2',\dots,n',1'',2'',\dots,n''\}$ using the map $x\mapsto x'$ and $x'\mapsto x''$, for $x\in\mathbf{n}$.
\item Let $\tau$ be the minimum, with respect to inclusions, partition of 
\begin{displaymath}
\{1,2,\dots,n,1',2',\dots,n',1'',2'',\dots,n''\},
\end{displaymath}
such that each part of both $\rho$ and $\pi$ is a subset of a part of $\tau$.
\item Let $\sigma$ be the maximum, with respect to inclusions, partition of 
\begin{displaymath}
\{1,2,\dots,n,1'',2'',\dots,n''\}, 
\end{displaymath}
such that each part of $\sigma$ is a subset of a part of $\tau$.
\item Define the product $\rho\pi$ as the partition of  $\mathbf{n}$ induced from $\sigma$
via the map $x''\mapsto x'$, for $x\in \mathbf{n}$.
\end{itemize}
Note that $S_n$ is, naturally, a submonoid of $\mathbf{P}_n$. Moreover, $S_n$ is the maximal 
subgroup of all invertible elements in $\mathbf{P}_n$.

A part of $\rho\in \mathbf{P}_n$ is called a {\em propagating} part provided that it intersects both sets
$\{1,2,\dots,n\}$ and $\{1',2',\dots,n'\}$. Partitions in which all parts are propagating are called
{\em propagating partitions}. The set of all  propagating partitions in $\mathbf{P}_n$ is denoted
by $\mathbf{PP}_n$, it is a submonoid of $\mathbf{P}_n$.

The monoid $\mathbf{P}_n$ is naturally ordered with respect to refinement: $\rho\leq \tau$ provided
that each part of $\rho$ is a subset of a part in $\tau$. With respect to this order, the partition
of $\mathbf{n}$ with just one part is the maximum element, while the partition of $\overline{\mathbf{n}}$ into
singletons is the minimum element. This order restricts to $\mathbf{PP}_n$. As elements of 
$\mathbf{P}_n$ are just equivalence relations, the poset $\mathbf{P}_n$ is a lattice and we 
denote by $\wedge$ and $\vee$ the corresponding meet and join operations, respectively. The poset
$\mathbf{PP}_n$ is a sublattice in $\mathbf{P}_n$ with the same meet and join. As 
$S_n\subset \mathbf{PP}_n$, all meets and joins in $\mathbf{P}_n$ of elements from  $S_n$ belong
to $\mathbf{PP}_n$.

We now define a $2$-category $\cB=\cB_n$. Similarly to Subsection~\ref{s2.3}, we start by declaring that 
\begin{itemize}
\item $\cB$ has one object $\mathtt{i}$;
\item $1$-morphisms in $\cB$ are elements in $S_n$;
\item composition of $1$-morphisms is induced from $S_n$;
\item the identity $1$-morphism is the identity transformation $\mathrm{id}_{\mathbf{n}}$.
\end{itemize}
It remains to define $2$-morphisms in $\cB$ and their compositions.
\begin{itemize}
\item For $\pi,\sigma\in S_n$, we define $\mathrm{Hom}_{\ccB}(\pi,\sigma)$ 
as the set of all $\alpha\in \mathbf{PP}_n$ such that we have both, $\pi\leq \alpha$ and $\sigma\leq \alpha$.
\item For $\pi,\sigma,\tau\in S_n$, and also any $\alpha\in\mathrm{Hom}_{\ccB}(\pi,\sigma)$ and
$\beta\in \mathrm{Hom}_{\ccB}(\sigma,\tau)$, we define $\beta\circ_1\alpha:=\beta\vee \alpha$.
\item For $\pi\in S_n$, we define the identity element in 
$\mathrm{Hom}_{\ccB}(\pi,\pi)$ to be $\pi$.
\item For $\pi,\sigma,\tau,\rho\in S_n$, and also any $\alpha\in\mathrm{Hom}_{\ccB}(\pi,\sigma)$ and
$\beta\in\mathrm{Hom}_{\ccB}(\tau,\rho)$, we define $\beta\circ_0\alpha:=\beta\alpha$, the
usual composition of partitions.
\end{itemize}

\begin{proposition}\label{prop4}
The construct $\cB$  above is a $2$-category.
\end{proposition}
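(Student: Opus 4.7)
The plan is to mirror the proof of Proposition~\ref{prop1}, replacing $\cap$ by the join $\vee$ in the lattice $\mathbf{P}_n$ and the inclusion of binary relations by the refinement order. Associativity of composition of $1$-morphisms is immediate since $S_n$ is a group. For vertical composition $\circ_1=\vee$: if $\alpha\in\mathrm{Hom}_{\ccB}(\pi,\sigma)$ and $\beta\in\mathrm{Hom}_{\ccB}(\sigma,\tau)$, then $\pi\le\alpha\le\beta\vee\alpha$ and $\tau\le\beta\le\beta\vee\alpha$, and $\beta\vee\alpha$ is a coarsening of the propagating partition $\sigma$, hence belongs to $\mathbf{PP}_n$. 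Associativity of $\circ_1$ is the associativity of $\vee$, and if $\alpha\in\mathrm{Hom}_{\ccB}(\pi,\sigma)$ then $\pi\le\alpha$ gives $\pi\vee\alpha=\alpha$, so $\pi$ is indeed the identity $2$-morphism.

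Well-definedness of $\circ_0$ hinges on monotonicity of partition multiplication: if $\pi\le\alpha$ and $\tau\le\beta$, then $\tau\pi\le\beta\alpha$. This can be read off the mini-max algorithm --- coarsening the inputs coarsens the intermediate partition $\tau$, hence coarsens the subsequent $\sigma$, and hence coarsens the product. Applied in turn to the two pairs $(\pi,\alpha)$ and $(\tau,\beta)$, it gives $\beta\alpha\in\mathrm{Hom}_{\ccB}(\tau\pi,\rho\sigma)$; propagating-ness is preserved since $\mathbf{PP}_n$ is a submonoid of $\mathbf{P}_n$. Associativity of $\circ_0$ is inherited from $\mathbf{P}_n$.

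For the interchange law I would establish verbatim analogues of Lemmata~\ref{lem2} and~\ref{lem3}: since $S_n$ consists of invertible elements in $\mathbf{P}_n$, left (resp.\ right) multiplication by $\pi\in S_n$ is an order-automorphism of $\mathbf{P}_n$, so it restricts to a bijection $\mathrm{Hom}_{\ccB}(\sigma,\tau)\to\mathrm{Hom}_{\ccB}(\pi\sigma,\pi\tau)$ (resp.\ to $\mathrm{Hom}_{\ccB}(\sigma\pi,\tau\pi)$), and being a lattice isomorphism it commutes with $\vee$. Exactly as in Proposition~\ref{prop1}, right multiplication by $\sigma^{-1}$ and left multiplication by $\mu^{-1}$ reduce the interchange identity to the special case $\sigma=\mu=\mathrm{id}_{\mathbf{n}}$.

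The hard part will be the base case, which amounts to showing that $\beta\alpha=\beta\vee\alpha$ whenever $\alpha,\beta\in\mathbf{PP}_n$ are both coarsenings of $\mathrm{id}_{\mathbf{n}}$. Any such coarsening is symmetric under $i\leftrightarrow i'$ and is determined by a set partition of $\mathbf{n}$: say $\alpha\leftrightarrow A$ with parts $\{P\cup P':P\in A\}$, and similarly $\beta\leftrightarrow B$. On the join side, $\beta\vee\alpha$ is the partition of $\overline{\mathbf{n}}$ with blocks $\{Q\cup Q':Q\in A\vee B\}$ (join in the ordinary partition lattice of $\mathbf{n}$). On the product side, unravelling the mini-max recipe shows that the blocks of the intermediate partition $\tau$ on $\{1,\dots,n,1',\dots,n',1'',\dots,n''\}$ are precisely $Q\cup Q'\cup Q''$ with $Q\in A\vee B$, because $\{i,i'\}$ lies in a block of $\beta$ and $\{i',i''\}$ lies in a block of $\alpha$, and composing the two equivalence relations yields $A\vee B$ on $\mathbf{n}$; hence the maximum partition $\sigma$ of $\{1,\dots,n,1'',\dots,n''\}$ inside $\tau$ has blocks $Q\cup Q''$, and thus $\beta\alpha\leftrightarrow A\vee B$ as well. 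With this identity both sides of the interchange law collapse to $\alpha\vee\beta\vee\gamma\vee\delta$, finishing the proof.
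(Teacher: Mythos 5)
Your proof is correct and follows essentially the same route as the paper's: check $\circ_1$ and $\circ_0$ are well defined using that $\mathbf{PP}_n$ is a sublattice and $\mathbf{P}_n$ is an ordered monoid, reduce the interchange law to the case $\sigma=\mu=\mathrm{id}_{\mathbf{n}}$ via the analogues of Lemmata~\ref{lem2} and \ref{lem3}, and use that $\beta\alpha=\beta\vee\alpha$ for partitions coarsening the identity. The only difference is that you spell out the mini-max computation behind that last identity, which the paper simply asserts.
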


\begin{proof}
The vertical composition  $\circ_1$  is clearly well-defined. It is associative as $\vee$ is associative.
If $\alpha\in\mathrm{Hom}_{\ccB}(\pi,\sigma)$ or $\alpha\in\mathrm{Hom}_{\ccB}(\sigma,\pi)$, then 
$\pi\leq \alpha$ and thus $\alpha\vee\pi =\alpha$. Therefore $\pi\in \mathrm{Hom}_{\ccB}(\pi,\pi)$ 
is the identity element.
 
Let us check that the horizontal composition $\circ_0$ is well-defined. From $\pi\leq \alpha$ and 
$\tau\leq \beta$ and the fact that $\mathbf{P}_n$ is ordered, we have $\tau\pi\leq \tau\alpha\leq \beta\alpha$.
Similarly, from $\sigma\leq \alpha$ and $\rho\leq \beta$ and the fact that $\mathbf{P}_n$ is ordered, 
we have $\rho\sigma\leq\rho\alpha\leq \beta\alpha$. It follows that 
$\beta\alpha\in \mathrm{Hom}_{\ccB}(\tau\pi,\rho\sigma)$ and thus $\circ_0$  is well-defined. 
Its associativity follows from the fact that usual composition of partitions is associative.

It remains to check the interchange law \eqref{eq1}. For this we fix any $1$-morphisms
$\pi,$ $\sigma,$ $\rho,$ $\tau,$ $\mu,$ $\nu$ and any 
$\alpha\in \mathrm{Hom}_{\ccB}(\pi,\sigma)$, $\beta\in \mathrm{Hom}_{\ccB}(\tau,\mu)$,
$\gamma\in \mathrm{Hom}_{\ccB}(\sigma,\rho)$ and $\delta\in \mathrm{Hom}_{\ccB}(\mu,\nu)$. Assume first that 
$\sigma=\mu=\mathrm{id}_{\mathbf{n}}$. In this case both $\alpha$, $\beta$, $\gamma$ and $\delta$
are partitions containing the identity relation $\mathrm{id}_{\mathbf{n}}$. Note that, given two 
partitions $x$ and $y$ containing the identity relation $\mathrm{id}_{\mathbf{n}}$, their product 
$xy$ as partitions equals $x\vee y$. Hence, in this particular case, both sides of 
\eqref{eq1} are equal to  $\alpha\vee\beta\vee\gamma\vee\delta$.  

Before proving the general case, we will need the following two lemmata:

\begin{lemma}\label{lem5}
Let $\pi,\sigma,\tau,\rho\in S_n$. 
\begin{enumerate}[$($i$)$]
\item\label{lem5.1} Left composition with $\pi$ induces a bijection between the sets 
$\mathrm{Hom}_{\ccB}(\sigma,\tau)$ and $\mathrm{Hom}_{\ccB}(\pi\sigma,\pi\tau)$.
\item\label{lem5.2} For any $\alpha\in \mathrm{Hom}_{\ccB}(\sigma,\tau)$ and
$\beta\in \mathrm{Hom}_{\ccB}(\tau,\rho)$, we have 
\begin{displaymath}
\pi\circ_0(\beta\circ_1\alpha)=(\pi\circ_0\beta)\circ_1(\pi\circ_0\alpha).
\end{displaymath}
\end{enumerate}
\end{lemma}

\begin{proof}
Left composition with $\pi$ simply renames elements of $\{1',2',\dots,n'\}$ in an
invertible way. This implies claim~\eqref{lem2.1}. Claim~\eqref{lem2.2} follows 
from claim~\eqref{lem2.1} and the observation that composition with invertible maps 
commutes with taking unions.
\end{proof}

\begin{lemma}\label{lem6}
Let $\pi,\sigma,\tau,\rho\in S_n$. 
\begin{enumerate}[$($i$)$]
\item\label{lem6.1} Right composition with $\pi$ induces a bijection between the sets 
$\mathrm{Hom}_{\ccB}(\sigma,\tau)$ and $\mathrm{Hom}_{\ccB}(\sigma\pi,\tau\pi)$.
\item\label{lem6.2} For any $\alpha\in \mathrm{Hom}_{\ccB}(\sigma,\tau)$ and
$\beta\in \mathrm{Hom}_{\ccB}(\tau,\rho)$, we have 
\begin{displaymath}
(\beta\circ_1\alpha)\circ_0\pi=(\beta\circ_0\pi)\circ_1(\alpha\circ_0\pi). 
\end{displaymath}
\end{enumerate}
\end{lemma}

\begin{proof}
Analogous to the proof of Lemma~\ref{lem5}.
\end{proof}

Using Lemmata~\ref{lem5} and \ref{lem6} together with associativity of $\circ_0$, 
right multiplication with $\sigma^{-1}$ and left multiplication with $\mu^{-1}$ reduces
the general case of \eqref{eq1} to the case $\sigma=\mu=\mathrm{id}_{\mathbf{n}}$ considered above.
This completes the proof.
\end{proof}

\section{$2$-categories in the linear world}\label{s3}

For more details on all the definitions in Section~\ref{s3}, we refer to \cite{GrMa}.

\subsection{Finitary $2$-categories}\label{s3.1}

Let $\Bbbk$ be a field. A $\Bbbk$-linear category $\mathcal{C}$ is called {\em finitary} provided that
it is additive, idempotent split and Krull-Schmidt 
(cf. \cite[Section~2.2]{Ri}) with finitely many isomorphism classes of indecomposable
objects and finite dimensional homomorphism spaces.

A $2$-category $\cC$ is called {\em prefinitary (over $\Bbbk$)} provided that
\begin{enumerate}[$($I$)$]
\item\label{cond.a} $\cC$ has finitely many objects;
\item\label{cond.b} each $\cC(\mathtt{i},\mathtt{j})$ is a finitary $\Bbbk$-linear category;
\item\label{cond.c} all compositions are biadditive and $\Bbbk$-linear whenever the notion makes sense.
\end{enumerate}

Following \cite{MM1}, a prefinitary $2$-category $\cC$ is called {\em finitary} provided that 
\begin{enumerate}[$($I$)$]
\setcounter{enumi}{3}
\item\label{cond.d} all identity $1$-morphisms are indecomposable.
\end{enumerate}

\subsection{$\Bbbk$-linearization of finite categories}\label{s3.2}

For any set $X$, let us denote by $\Bbbk[X]$ the vector space (over $\Bbbk$) of all 
formal linear combinations of elements in $X$ with coefficients in $\Bbbk$. 
Then we can view $X$ as the {\em standard basis} in $\Bbbk[X]$.
By convention, $\Bbbk[X]=\{0\}$ if $X=\varnothing$.

Let $\mathcal{C}$ be a finite category, that is a category with a finite number of  morphisms. 
Define the {\em $\Bbbk$-linearization} $\mathcal{C}_{\Bbbk}$ of
$\mathcal{C}$ as follows:
\begin{itemize}
\item the objects in $\mathcal{C}_{\Bbbk}$ and $\mathcal{C}$ are the same;
\item we have $\mathcal{C}_{\Bbbk}(\mathtt{i},\mathtt{j}):=\Bbbk[\mathcal{C}(\mathtt{i},\mathtt{j})]$;
\item composition in $\mathcal{C}_{\Bbbk}$ is induced from that in $\mathcal{C}$ by $\Bbbk$-bilinearity.
\end{itemize}

\subsection{$\Bbbk$-additivization of finite categories}\label{s3.3}

Assume that objects of the category $\mathcal{C}$ are $\mathtt{1}$, $\mathtt{2}$,\dots, $\mathtt{k}$. 
For $\mathcal{C}$ as in Subsection~\ref{s3.2}, define
the {\em additive $\Bbbk$-linearization} $\mathcal{C}_{\Bbbk}^{\oplus}$ of $\mathcal{C}$ in the following way:
\begin{itemize}
\item objects in $\mathcal{C}_{\Bbbk}^{\oplus}$ are elements in $\mathbb{Z}_{\geq 0}^k$,
we identify  $(m_1,m_2,\dots,m_k)\in \mathbb{Z}_{\geq 0}^k$ with the symbol
\begin{displaymath}
\underbrace{\mathtt{1}\oplus\dots\oplus\mathtt{1}}_{m_1 \text{ times}}\oplus 
\underbrace{\mathtt{2}\oplus\dots\oplus\mathtt{2}}_{m_2 \text{ times}}\oplus\dots\oplus 
\underbrace{\mathtt{k}\oplus\dots\oplus\mathtt{k}}_{m_k \text{ times}};
\end{displaymath}
\item the set $\mathcal{C}_{\Bbbk}^{\oplus}(\mathtt{i}_1\oplus
\mathtt{i}_2\oplus\dots\oplus\mathtt{i}_l,\mathtt{j}_1\oplus
\mathtt{j}_2\oplus\dots\oplus\mathtt{j}_m)$ is given by the set of all matrices of the form
\begin{displaymath}
\left(\begin{array}{cccc}
f_{11}& f_{12}&\dots& f_{1l} \\
f_{21}& f_{22}&\dots& f_{2l} \\
\vdots& \vdots&\ddots& \vdots \\
f_{m1}& f_{m2}&\dots& f_{ml}  
\end{array}\right) 
\end{displaymath}
where $f_{st}\in\mathcal{C}_{\Bbbk}(\mathtt{i}_t,\mathtt{j}_s)$;
\item composition in $\mathcal{C}_{\Bbbk}^{\oplus}$ is given by the usual matrix multiplication;
\item the additive structure is given by addition in $\mathbb{Z}_{\geq 0}^k$.
\end{itemize}
One should think of $\mathcal{C}_{\Bbbk}^{\oplus}$ as the additive category generated by $\mathcal{C}_{\Bbbk}$.

\subsection{$\Bbbk$-linearization of finite $2$-categories}\label{s3.4}

Let now $\cC$ be a finite $2$-category. We define the {\em $\Bbbk$-linearization} $\cC_{\Bbbk}$
of $\cC$ over $\Bbbk$ as follows:
\begin{itemize}
\item $\cC_{\Bbbk}$ and $\cC$ have the same objects;
\item we have $\cC_{\Bbbk}(\mathtt{i},\mathtt{j}):=\cC(\mathtt{i},\mathtt{j})_{\Bbbk}^{\oplus}$;
\item composition in $\cC_{\Bbbk}$ is induced from composition in $\cC$ by biadditivity and $\Bbbk$-bilinearity.
\end{itemize}
By construction, the $2$-category $\cC_{\Bbbk}$ satisfies conditions \eqref{cond.a} and 
\eqref{cond.c} from the definition of a finitary $2$-category. A part of condition 
\eqref{cond.b} related to additivity and finite dimensionality of morphism spaces is also satisfied. 
Therefore, the $2$-category $\cC_{\Bbbk}$ is finitary if and only if, the $2$-endomorphism $\Bbbk$-algebra
of every $1$-morphism in $\cC_{\Bbbk}$ is local.

\subsection{$\Bbbk$-finitarization of finite $2$-categories}\label{s3.45}

Let $\cC$ be a finite $2$-category. Consider the $2$-category $\cC_{\Bbbk}$. 
We define the {\em finitarization} $\Bbbk\cC$ of $\cC_{\Bbbk}$ as follows:
\begin{itemize}
\item $\Bbbk\cC$ and $\cC_{\Bbbk}$ have the same objects;
\item $\Bbbk\cC(\mathtt{i},\mathtt{j})$ is defined to be the idempotent completion  
of $\cC_{\Bbbk}(\mathtt{i},\mathtt{j})$;
\item composition in $\Bbbk\cC$ is induced from composition in $\cC$.
\end{itemize}
By construction, the $2$-category $\Bbbk\cC$ is prefinitary.  Therefore, the $2$-category 
$\Bbbk\cC$ is finitary if and only if, the $2$-endomorphism $\Bbbk$-algebra of every 
identity $1$-morphism in $\Bbbk\cC$ is local.

\subsection{Idempotent splitting}\label{s3.5}

Let $\cC$ be a prefinitary $2$-category.
If $\cC$ does not satisfy condition \eqref{cond.d}, then there is an object $\mathtt{i}\in\cC$ such that 
the endomorphism algebra $\mathrm{End}_{\Bbbk\ccC}(\mathbbm{1}_{\mathtt{i}})$ is not local, that 
is, contains a non-trivial idempotent. In this subsection we describe a version of ``idempotent splitting'',
for all $\mathrm{End}_{\Bbbk\ccC}(\mathbbm{1}_{\mathtt{i}})$, to turn $\cC$ into a finitary 
$2$-category which we denote by $\overline{\cC}$.

For $\mathtt{i}\in\cC$, the $2$-endomorphism algebra of  $\mathbbm{1}_{\mathtt{i}}$ is equipped with two
unital associative operations, namely, $\circ_0$ and $\circ_1$. These two operations 
satisfy the interchange law. By the classical
Eckmann-Hilton argument (see, for example, \cite{EH} or \cite[Subsection~1.1]{Ko}), both these operations,
when restricted to the $2$-endomorphism algebra of  $\mathbbm{1}_{\mathtt{i}}$, must be commutative and,
in fact, coincide. Therefore we can unambiguously speak about the commutative $2$-endomorphism algebra 
$\mathrm{End}_{\ccC}(\mathbbm{1}_{\mathtt{i}})$. Let $\varepsilon_{\mathtt{i}}^{(j)}$, where
$j=1,2,\dots,k_{\mathtt{i}}$, be a complete list of primitive idempotents in 
$\mathrm{End}_{\ccC}(\mathbbm{1}_{\mathtt{i}})$. Note that the elements $\varepsilon_{\mathtt{i}}^{(j)}$
are identities in the minimal ideals of $\mathrm{End}_{\ccC}(\mathbbm{1}_{\mathtt{i}})$ and hence 
are canonically determined (up to permutation).

We now define a new $2$-category, which we denote by $\overline{\cC}$, in the following way:
\begin{itemize}
\item Objects in $\overline{\cC}$ are $\mathtt{i}^{(s)}$, where $\mathtt{i}\in\cC$ and
$s=1,2,\dots,k_{\mathtt{i}}$.
\item $1$-morphisms in $\overline{\cC}(\mathtt{i}^{(s)},\mathtt{j}^{(t)})$ are the same as 
$1$-morphisms in ${\cC}(\mathtt{i},\mathtt{j})$.
\item for $1$-morphisms $\mathrm{F},\mathrm{G}\in \overline{\cC}(\mathtt{i}^{(s)},\mathtt{j}^{(t)})$,
the set $\mathrm{Hom}_{\overline{\ccC}}(\mathrm{F},\mathrm{G})$ equals
\begin{displaymath}
\varepsilon_{\mathtt{j}}^{(t)}\circ_0
\mathrm{Hom}_{{\ccC}}(\mathrm{F},\mathrm{G})\circ_0 \varepsilon_{\mathtt{i}}^{(s)}. 
\end{displaymath}
\item The identity $1$-morphism in $\overline{\cC}(\mathtt{i}^{(s)},\mathtt{i}^{(s)})$
is $\mathbbm{1}_{\mathtt{i}}$.
\item All compositions are induced from $\cC$.
\end{itemize}

\begin{lemma}\label{lem91}
Let $\cC$ be a prefinitary $2$-category. Then
the construct $\overline{\cC}$ is a finitary $2$-category. 
\end{lemma}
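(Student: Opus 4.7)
The plan is to verify in order that $\overline{\cC}$ (a) is indeed a $2$-category, (b) satisfies the prefinitariness conditions \eqref{cond.a}--\eqref{cond.c}, and (c) satisfies the finitariness condition \eqref{cond.d}, which in view of Subsection~\ref{s3.45} amounts to locality of the $2$-endomorphism algebras of the identity $1$-morphisms. The crucial ingredient throughout is the Eckmann-Hilton identification of $\circ_0$ and $\circ_1$ on $\mathrm{End}_{\ccC}(\mathbbm{1}_{\mathtt{i}})$ invoked just before the lemma, together with the fact that each $\varepsilon_{\mathtt{i}}^{(s)}$ is a primitive idempotent in this commutative algebra.

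For (a), the map $\alpha \mapsto \varepsilon_{\mathtt{j}}^{(t)}\circ_0 \alpha \circ_0 \varepsilon_{\mathtt{i}}^{(s)}$ on $\mathrm{Hom}_{\ccC}(\mathrm{F},\mathrm{G})$ is (by the interchange law together with $\varepsilon_{\mathtt{i}}^{(s)}\circ_0\varepsilon_{\mathtt{i}}^{(s)}=\varepsilon_{\mathtt{i}}^{(s)}$) an idempotent $\Bbbk$-linear endomorphism, so its image is well-defined as a subspace. Closure of these subspaces under $\circ_1$ follows from the interchange law together with $\varepsilon^{(s)}\circ_1\varepsilon^{(s)}=\varepsilon^{(s)}$; closure under $\circ_0$ (when the intermediate indices match) follows from $\varepsilon_{\mathtt{j}}^{(t)}\circ_0\varepsilon_{\mathtt{j}}^{(t)}=\varepsilon_{\mathtt{j}}^{(t)}$. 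The identity $2$-morphism on $\mathrm{F}\in\overline{\cC}(\mathtt{i}^{(s)},\mathtt{j}^{(t)})$ is $\varepsilon_{\mathtt{j}}^{(t)}\circ_0 \mathrm{id}_{\mathrm{F}}\circ_0\varepsilon_{\mathtt{i}}^{(s)}$; in particular the identity $2$-morphism of $\mathbbm{1}_{\mathtt{i}}\in\overline{\cC}(\mathtt{i}^{(s)},\mathtt{i}^{(s)})$ reduces to $\varepsilon_{\mathtt{i}}^{(s)}$ itself. Associativity of both compositions and the interchange law are inherited from $\cC$.

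For (b), the set of objects $\{\mathtt{i}^{(s)}\}$ is finite because there are finitely many $\mathtt{i}\in\cC$ and each $k_{\mathtt{i}}$ is finite. Each hom-category $\overline{\cC}(\mathtt{i}^{(s)},\mathtt{j}^{(t)})$ has the same underlying additive structure on $1$-morphisms as $\cC(\mathtt{i},\mathtt{j})$, with hom spaces cut out by the idempotent above; these are still finite-dimensional and idempotent split, and indecomposable objects form a subset of (direct summands of) indecomposables in $\cC(\mathtt{i},\mathtt{j})$, so Krull-Schmidt and the finiteness of isomorphism classes carry over. Biadditivity and $\Bbbk$-bilinearity of composition likewise descend from $\cC$.

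For (c), the $2$-endomorphism algebra of $\mathbbm{1}_{\mathtt{i}}$ in $\overline{\cC}(\mathtt{i}^{(s)},\mathtt{i}^{(s)})$ is
\begin{displaymath}
\varepsilon_{\mathtt{i}}^{(s)}\circ_0\mathrm{End}_{\ccC}(\mathbbm{1}_{\mathtt{i}})\circ_0 \varepsilon_{\mathtt{i}}^{(s)}=\varepsilon_{\mathtt{i}}^{(s)}\,\mathrm{End}_{\ccC}(\mathbbm{1}_{\mathtt{i}}),
\end{displaymath}
where the equality uses commutativity of $\mathrm{End}_{\ccC}(\mathbbm{1}_{\mathtt{i}})$. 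This is precisely the local summand of $\mathrm{End}_{\ccC}(\mathbbm{1}_{\mathtt{i}})$ cut out by a primitive idempotent in a finite-dimensional commutative algebra, hence local, so by the criterion recorded at the end of Subsection~\ref{s3.45} condition \eqref{cond.d} holds. The main obstacle is the first step: disciplined bookkeeping with the interchange law is needed to see that both the subspaces and the proposed identity elements behave correctly; once (a) is in hand, (b) is pure inheritance and (c) is a one-line commutative-algebra observation, which is the whole raison d'être of the construction.
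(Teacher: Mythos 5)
Your proof is correct and takes essentially the same route as the paper: the $2$-category structure and conditions \eqref{cond.a}--\eqref{cond.c} are inherited from $\cC$ by construction, and the only substantive point is that $\varepsilon_{\mathtt{i}}^{(s)}\circ_0\mathrm{End}_{\ccC}(\mathbbm{1}_{\mathtt{i}})\circ_0\varepsilon_{\mathtt{i}}^{(s)}$ is local because $\varepsilon_{\mathtt{i}}^{(s)}$ is a primitive idempotent in the commutative algebra $\mathrm{End}_{\ccC}(\mathbbm{1}_{\mathtt{i}})$. The paper simply asserts the inheritance claims without the interchange-law bookkeeping you spell out, so your write-up is a more detailed version of the same argument.
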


\begin{proof}
The fact that $\overline{\cC}$ is a $2$-category follows from the fact that 
$\cC$ is a $2$-category, by construction. For $\overline{\cC}$, conditions \eqref{cond.a}, 
\eqref{cond.b} and \eqref{cond.c} from the definition of  a prefinitary $2$-category,
follow from the corresponding conditions for the original category $\cC$.

It remains to show that $\overline{\cC}$ satisfies \eqref{cond.d}. By construction, the
endomorphism algebra of the identity $1$-morphism $\mathbbm{1}_{\mathtt{i}}$ in 
$\overline{\cC}(\mathtt{i}^{(s)},\mathtt{i}^{(s)})$ is
\begin{displaymath}
\varepsilon_{\mathtt{i}}^{(s)}\circ_0\mathrm{End}_{\ccC}(\mathbbm{1}_{\mathtt{i}})\circ_0 
\varepsilon_{\mathtt{i}}^{(s)}.
\end{displaymath}
The latter algebra is local as $\varepsilon_{\mathtt{i}}^{(s)}$ is a minimal idempotent.
This means that condition \eqref{cond.d} is satisfied and completes the proof.
\end{proof}

Starting from $\overline{\cC}$ and taking, for each $\mathtt{i}\in\cC$, a direct sum of 
$\mathtt{i}^{(s)}$, where $s=1,2,\dots,k_{\mathtt{i}}$, one obtains a $2$-category biequivalent
to the original $2$-category $\cC$. The $2$-categories $\cC$ and $\overline{\cC}$ are, clearly, 
Morita equivalent in the sense of \cite{MM4}. 

{\bf Warning:} Despite of the fact that $\overline{\cC}(\mathtt{i}^{(s)},\mathtt{j}^{(t)})$
and ${\cC}(\mathtt{i},\mathtt{j})$ have {\em the same} $1$-morphisms, these two categories, in general,
have {\em different}  indecomposable $1$-mor\-phisms as the sets of $2$-morphisms are different.
In particular, indecomposable $1$-morphisms in ${\cC}(\mathtt{i},\mathtt{j})$ may become isomorphic to 
zero in $\overline{\cC}(\mathtt{i}^{(s)},\mathtt{j}^{(t)})$.

We note that the operation of idempotent splitting is also known as taking
{\em Cauchy completion} or {\em Karoubi envelope}.

\section{Comparison of $\overline{\Bbbk\cA_n}$ and $\overline{\Bbbk\cB_n}$ 
to $2$-categories associated with ordered monoids $IS_n$ and $F^*_n$}\label{s4}

\subsection{$2$-categories and ordered monoids}\label{s4.0}

Let $(S,\cdot,1)$ be a monoid and $\leq $ be an {\em admissible order} on $S$, that is a partial (reflexive) 
order such that $s\leq t$ implies both $sx\leq tx$ and $xs\leq xt$, for all $x,s,t\in S$. 
Then we can associate with $S$ a $2$-category $\cS=\cS_S=\cS_{(S,\cdot,1,\leq)}$ defined as follows:
\begin{itemize}
\item $\cS$ has one object $\mathtt{i}$;
\item $1$-morphisms are elements in $S$;
\item for $s,t\in S$, the set $\mathrm{Hom}_{\ccS}(s,t)$ is empty if $s\not\leq t$ and contains one element
$(s,t)$ otherwise;
\item composition of $1$-morphisms is given by $\cdot$;
\item both horizontal and vertical compositions of $2$-morphism are the only possible compositions
(as sets of $2$-morphisms are either empty or singletons);
\item the identity $1$-morphism is $1$.
\end{itemize}
Admissibility of $\leq$ makes the above well-defined and ensures that $\cS$ becomes a $2$-category.

A canonical example of the above is when $S$ is an inverse monoid and $\leq $ is the {\em natural partial order}
on $S$ defined as follows: $s\leq t$ if and only if $s=e t$ for some idempotent $e\in S$.

\subsection{(Co)ideals of ordered semigroups}\label{s4.1}

Let $S$ be a semigroup equipped with an admissible order $\leq$. For a non-empty subset $X\subset S$, let
\begin{displaymath}
X^{\downarrow}:=\{s\in S\,:\, \text{ there is }x\in X\text{ such that }s\leq x\} 
\end{displaymath}
denote the {\em lower set} or {\em ideal} generated by $X$. Let
\begin{displaymath}
X^{\uparrow}:=\{s\in S\,:\, \text{ there is }x\in X\text{ such that }x\leq s\} 
\end{displaymath}
denote the {\em upper set} or {\em coideal} generated by $X$.

\begin{lemma}\label{lem7}
For any subsemigroup $T\subset S$, both $T^{\downarrow}$ and $T^{\uparrow}$ are subsemigroups of $S$.
\end{lemma}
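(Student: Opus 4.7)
The plan is to verify directly that both sets are closed under the semigroup operation, using only the definitions of $X^{\downarrow}$ and $X^{\uparrow}$ together with admissibility of the order.

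First I would handle $T^{\downarrow}$. Given $s_1, s_2 \in T^{\downarrow}$, by definition there exist $t_1, t_2 \in T$ with $s_1 \leq t_1$ and $s_2 \leq t_2$. Applying admissibility twice (once on the right by $s_2$, once on the left by $t_1$), I get
\begin{displaymath}
s_1 s_2 \leq t_1 s_2 \leq t_1 t_2.
\end{displaymath}
Since $T$ is a subsemigroup, $t_1 t_2 \in T$, so $s_1 s_2 \leq t_1 t_2$ exhibits $s_1 s_2 \in T^{\downarrow}$.

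The case of $T^{\uparrow}$ is entirely symmetric. Given $s_1, s_2 \in T^{\uparrow}$, pick $t_1, t_2 \in T$ with $t_1 \leq s_1$ and $t_2 \leq s_2$, and use admissibility to obtain $t_1 t_2 \leq s_1 t_2 \leq s_1 s_2$; since $t_1 t_2 \in T$, this places $s_1 s_2$ in $T^{\uparrow}$.

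There is no real obstacle here: the lemma is essentially a one-line consequence of admissibility, which by definition says precisely that $\leq$ is compatible with left and right multiplication. The only thing worth noting is that one needs admissibility on both sides to chain the two inequalities $s_1 s_2 \leq t_1 s_2$ and $t_1 s_2 \leq t_1 t_2$ (respectively $t_1 t_2 \leq s_1 t_2$ and $s_1 t_2 \leq s_1 s_2$), which is exactly what the hypothesis in Subsection~\ref{s4.0} supplies.
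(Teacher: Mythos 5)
Your proof is correct and follows exactly the same argument as the paper: chain $s_1 s_2 \leq t_1 s_2 \leq t_1 t_2$ using admissibility on both sides and closure of $T$. The only difference is that you write out the $T^{\uparrow}$ case explicitly, whereas the paper dismisses it as symmetric.
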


\begin{proof}
We prove the claim for $T^{\downarrow}$, for $T^{\uparrow}$ the arguments are similar. Let $a,b\in T^{\downarrow}$.
Then there exist $s,t\in T$ such that $a\leq s$ and $b\leq t$. As $\leq$ is admissible, we have
$ab\leq sb\leq st$. Now, $st\in T$ as $T$ is a subsemigroup, and thus $ab\in T^{\downarrow}$.
\end{proof}

\subsection{The symmetric inverse monoid}\label{s4.2}

For $n\in\mathbb{N}$, we denote by $IS_n$ the {\em symmetric inverse monoid} on $\mathbf{n}$, see \cite{GM}.
It consists of all bijections between subsets of $\mathbf{n}$. Alternatively, we can identify
$IS_n$ with $S_n^{\downarrow}$ inside the ordered monoid $\mathbf{B}_n$. 
The monoid $IS_n$ is an inverse monoid. The natural partial 
order on the inverse monoid $IS_n$ coincides with the inclusion order inherited from $\mathbf{B}_n$.
The group $S_n$ is the group of invertible elements in $IS_n$.

\subsection{The dual symmetric inverse monoid}\label{s4.3}

For $n\in\mathbb{N}$, we denote by $I^*_n$ the {\em dual symmetric inverse monoid} on $\mathbf{n}$, see \cite{FL}.
It consists of all bijections between quotients of $\mathbf{n}$. Alternatively, we can identity $I^*_n$ 
with $\mathbf{PP}_n$ in the obvious way. The monoid $I^*_n$ is an inverse monoid. The natural partial 
order on the inverse monoid $I^*_n$ coincides with the order inherited from $\mathbf{P}_n$.
The group $S_n$ is the group of invertible elements in $I^*_n$. 

We also consider the {\em maximal factorizable submonoid} $F^*_n$ of $I^*_n$, that is the submonoid of all 
elements which can be written in the form $\sigma\varepsilon$, where $\sigma\in S_n$ and 
$\varepsilon$ is an idempotent in $I^*_n$. Idempotents in $I^*_n$ are exactly the identity transformations
of quotient sets of  $\mathbf{n}$, equivalently, idempotents in $I^*_n$ coincide with the principal
coideal in $\mathbf{P}_n$ generated by the identity element.

\begin{lemma}\label{lem8}
The monoid $F^*_n$ coincides with the subsemigroup $S_n^{\uparrow}$ of $\mathbf{PP}_n$.
\end{lemma}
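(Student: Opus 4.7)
The plan is to prove the two inclusions $F^*_n \subseteq S_n^{\uparrow}$ and $S_n^{\uparrow} \subseteq F^*_n$ separately, using only the admissibility of the refinement order $\leq$ on $\mathbf{P}_n$ together with the characterization of idempotents in $I^*_n$ already recorded immediately above the statement.

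For the forward inclusion, I would start with an arbitrary $\alpha \in F^*_n$, written as $\alpha = \sigma\varepsilon$ with $\sigma\in S_n$ and $\varepsilon$ an idempotent of $I^*_n$. Since the idempotents of $I^*_n$ are precisely the elements of $\mathbf{P}_n$ lying in the principal coideal of $\mathrm{id}_{\mathbf{n}}$, we have $\mathrm{id}_{\mathbf{n}}\leq \varepsilon$. Multiplying on the left by $\sigma$ and using admissibility of $\leq$ (which is just the ordered-monoid property of $\mathbf{P}_n$ recalled in Subsection~\ref{s2.4}), this gives $\sigma = \sigma\,\mathrm{id}_{\mathbf{n}} \leq \sigma\varepsilon = \alpha$. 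Hence $\alpha\in S_n^{\uparrow}$.

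For the reverse inclusion, I would take $\alpha \in S_n^{\uparrow}$ and pick $\pi\in S_n$ with $\pi\leq \alpha$. The natural candidate decomposition is $\sigma:=\pi$ and $\varepsilon:=\pi^{-1}\alpha$; one checks two things. First, $\alpha = \sigma\varepsilon$, which holds because $\pi$ is invertible in $\mathbf{P}_n$ so $\pi\pi^{-1}\alpha=\alpha$ by associativity of partition composition. Second, $\varepsilon$ is an idempotent of $I^*_n$: applying left multiplication by $\pi^{-1}$ to $\pi\leq\alpha$ and using admissibility yields $\mathrm{id}_{\mathbf{n}} = \pi^{-1}\pi \leq \pi^{-1}\alpha = \varepsilon$, which places $\varepsilon$ in the coideal of $\mathrm{id}_{\mathbf{n}}$, hence among the idempotents of $I^*_n$. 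Thus $\alpha\in F^*_n$.

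There isn't really a hard step here: the main content sits in the ordered-monoid structure of $\mathbf{P}_n$ and the description of idempotents in $I^*_n$, both of which are already stated. The only minor subtlety worth explicitly noting is that $S_n^{\uparrow}$, a priori defined in $\mathbf{P}_n$, does land inside $\mathbf{PP}_n$: if $\pi\leq\alpha$ with $\pi\in S_n$, then $\alpha$ is a coarsening of a partition whose parts all meet both $\mathbf{n}$ and $\mathbf{n}'$, so every part of $\alpha$ is propagating. This confirms that $S_n^{\uparrow}$ as a subset of $\mathbf{PP}_n$ is the same as $S_n^{\uparrow}$ computed in $\mathbf{P}_n$, so both inclusions are meaningful and the equality $F^*_n = S_n^{\uparrow}$ follows.
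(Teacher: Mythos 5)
Your proof is correct and follows essentially the same route as the paper's: both inclusions rest on the admissibility of $\leq$ on $\mathbf{P}_n$ and on the identification of the idempotents of $I^*_n$ with the coideal of $\mathrm{id}_{\mathbf{n}}$, with your reverse inclusion producing the factorization $\alpha=\pi(\pi^{-1}\alpha)$ where the paper writes $\rho=(\rho\sigma^{-1})\sigma$ -- a purely cosmetic left/right difference. The paper's forward inclusion instead invokes Lemma~\ref{lem7} (that $S_n^{\uparrow}$ is a subsemigroup containing $S_n$ and the idempotents), but your direct computation $\sigma=\sigma\,\mathrm{id}_{\mathbf{n}}\leq\sigma\varepsilon$ is an equally valid one-liner.
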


\begin{proof}
As $S_n^{\uparrow}$ contains both $S_n$ and all idempotents of  $I^*_n$, we have 
$F^*_n\subset S_n^{\uparrow}$. On the other hand, let $\rho\in S_n^{\uparrow}$. Then
$\sigma\leq \rho$ for some $\sigma\in S_n$. This means that $\mathrm{id}_{\mathbf{n}}\leq \rho\sigma^{-1}$.
Hence $\rho\sigma^{-1}$ is an idempotent and $\rho=(\rho\sigma^{-1})\sigma\in F^*_n$.
\end{proof}

\subsection{Fiat $2$-categories}\label{s4.4}

Following \cite{MM1}, we say that a finitary $2$-category $\cC$ is {\em fiat} provided that there exists
a weak anti-involution $\star:\cC\to\cC^{\mathrm{co},\mathrm{op}}$, 
such that, for any objects $\mathtt{i},\mathtt{j}\in\cC$ and any
$1$-morphism $\mathrm{F}\in \cC(\mathtt{i},\mathtt{j})$, there 
are $2$-morphisms $\eta:\mathbbm{1}_{\mathtt{i}}\to \mathrm{F}^{\star}\mathrm{F}$ and
$\varepsilon: \mathrm{F}\mathrm{F}^{\star}\to \mathbbm{1}_{\mathtt{j}}$ such that 
\begin{displaymath}
(\varepsilon\circ_0 \mathrm{id}_{\mathrm{F}})\circ_1(\mathrm{id}_{\mathrm{F}}\circ_0\eta)=\mathrm{id}_{\mathrm{F}} 
\quad\text{ and }\quad
(\mathrm{id}_{\mathrm{F}^{\star}}\circ_0 \varepsilon)\circ_1
(\eta\circ_0\mathrm{id}_{\mathrm{F}^{\star}})=\mathrm{id}_{\mathrm{F}^{\star}}.
\end{displaymath}
This means that $\mathrm{F}$ and $\mathrm{F}^{\star}$ are biadjoint in $\cC$ and hence also in
any $2$-representation of $\cC$. 
The above property is usually called {\em existence of adjunction $2$-morphisms}.

There are various classes of $2$-categories whose axiomatization covers some parts
of the axiomatization of fiat $2$-categories, see, for example, 
{\em compact categories}, {\em rigid categories}, {\em monoidal categories with duals}
and {\em 2-categories with adjoints}.

\subsection{Comparison of fiatness}\label{s4.6}

\begin{theorem}\label{thmmain}
Let $n\in\mathbb{N}$.

\begin{enumerate}[$($i$)$]
\item\label{thmmain.1} Both $2$-categories, $\overline{\Bbbk\cA_n}$ and $\overline{\Bbbk\cB_n}$, 
are fiat.
\item\label{thmmain.2} Both $2$-categories, $\Bbbk\cS_{IS_n}$ and $\Bbbk\cS_{F_n^*}$, are finitary but not fiat.
\end{enumerate}
\end{theorem}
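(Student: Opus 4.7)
For part~\eqref{thmmain.1} the plan is to verify the three ingredients of fiatness. Finitariness of $\overline{\Bbbk\cA_n}$ and $\overline{\Bbbk\cB_n}$ is immediate from Lemma~\ref{lem91}, since the $\Bbbk$-linearizations $\Bbbk\cA_n$ and $\Bbbk\cB_n$ of the finite $2$-categories $\cA_n$ and $\cB_n$ are prefinitary. The weak anti-involution $\star$ on $\cA_n$ sends $\pi\mapsto\pi^{-1}$ on $1$-morphisms and a relation $\alpha$ to its converse $\alpha^{-1}$ on $2$-morphisms; the analogous involution on $\cB_n$ swaps primed and unprimed elements of $\overline{\mathbf{n}}$. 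Both are strict anti-involutions and fix every partial identity (respectively every idempotent of $I^*_n$). Since $\mathrm{End}_{\ccC}(\mathbbm{1})$ is Eckmann--Hilton commutative and spanned by such basis elements, $\star$ fixes every primitive idempotent and hence descends to $\overline{\Bbbk\cC}$ fixing each object.

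The main technical step is constructing adjunction $2$-morphisms. For $\pi\in S_n$ regarded as a $1$-morphism in $\overline{\Bbbk\cA_n}(\mathtt{i}^{(X)},\mathtt{i}^{(Y)})$, a short direct computation (exploiting a commutation relation between $\pi$ and the primitive idempotents of $\mathrm{End}_{\ccA_n}(\mathbbm{1})$) shows that $\varepsilon_{\mathtt{i}}^{(Y)}\circ_0\pi\circ_0\varepsilon_{\mathtt{i}}^{(X)}$ vanishes unless $X$ and $Y$ are related by $\pi$, and otherwise serves as the identity $2$-morphism of $\pi$ in $\overline{\Bbbk\cA_n}$, spanning its one-dimensional local $2$-endomorphism algebra. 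For such $\pi$ the adjunction data is $\eta:=\varepsilon_{\mathtt{i}}^{(X)}$ and $\varepsilon:=\varepsilon_{\mathtt{i}}^{(Y)}$. Under the interchange law, the triangle identity reduces to the manifest equalities $\varepsilon_{\mathtt{i}}^{(X)}\cap\varepsilon_{\mathtt{i}}^{(X)}=\varepsilon_{\mathtt{i}}^{(X)}$, $\pi\cap\pi=\pi$, and $\varepsilon_{\mathtt{i}}^{(Y)}\cap\varepsilon_{\mathtt{i}}^{(Y)}=\varepsilon_{\mathtt{i}}^{(Y)}$. Since every $1$-morphism of $\overline{\Bbbk\cA_n}$ decomposes as a direct sum of such $\pi$'s after idempotent splitting, fiatness follows by additivity; the $\overline{\Bbbk\cB_n}$ argument is identical with intersections replaced by partition joins.

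For part~\eqref{thmmain.2}, finitariness of $\Bbbk\cS_{IS_n}$ and $\Bbbk\cS_{F^*_n}$ is immediate: each $\mathrm{Hom}_{\ccS_S}(s,t)$ is empty or a singleton, so $\mathrm{End}_{\Bbbk\ccS_S}(\mathbbm{1})\cong\Bbbk$ is local. For failure of fiatness, suppose $\star$ is a weak anti-involution of $\Bbbk\cS_S$. Since indecomposable $1$-morphisms correspond bijectively to elements of $S$ and all hom spaces are at most one-dimensional, $\star$ restricts to an order-reversing anti-involution of the monoid $S$ with $\star(1)=1$. The unit $\eta\colon\mathbbm{1}\to\star(s)\cdot s$ exists only if $1\leq\star(s)\cdot s$ in the natural order; the key general observation, valid in any inverse monoid $M$, is that $1\leq t$ forces $t=1$, since $1=et$ with $e$ idempotent yields $e=e\cdot 1=e(et)=e^2 t=et=1$ and hence $t=et=1$. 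This forces $\star(s)\cdot s=1$, so $\star(s)$ is a left inverse of $s$; in both $IS_n$ and $F^*_n$ an element has a left inverse iff it is a unit, i.e., lies in $S_n$. For $n\geq 2$ both $IS_n\setminus S_n$ and $F^*_n\setminus S_n$ are nonempty (e.g., the empty partial bijection in $IS_n$, or the maximum idempotent in $F^*_n$), a contradiction. The main obstacle lies in the analysis of the sandwiched $2$-endomorphism algebras in part~\eqref{thmmain.1}; once this is clear, the triangle identities are reduced by the interchange law to trivial Boolean/lattice equalities.
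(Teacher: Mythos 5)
Your proposal is correct and follows essentially the same route as the paper: for part (i) the adjunctions come from the fact that every $1$-morphism of $\cA_n$ and $\cB_n$ is invertible in $S_n$ (so biadjoint to its inverse), with the adjunction data surviving additive and idempotent completion --- your explicit $\eta=\varepsilon_{\mathtt{i}}^{(X)}$, $\varepsilon=\varepsilon_{\mathtt{i}}^{(Y)}$ is just the componentwise form of the paper's appeal to ``adjoints of direct sums are direct sums of adjoints''; for part (ii) both arguments rest on the observation that the natural partial order admits no element strictly above the identity, so no unit $\eta:\mathbbm{1}\to\mathrm{F}^{\star}\mathrm{F}$ can exist for a non-invertible $\mathrm{F}$. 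Your explicit verification that $\mathbbm{1}\leq t$ forces $t=1$ in an inverse monoid, and your remark restricting to $n\geq 2$ where non-units actually exist, are if anything slightly more careful than the paper's one-line version of the same point.
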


\begin{proof}
The endomorphism algebra of any $1$-morphism in $\Bbbk\cS_{IS_n}$ is $\Bbbk$, by definition.
Therefore $\Bbbk\cS_{IS_n}$ is finitary by construction.
The category $\Bbbk\cS_{IS_n}$ cannot be fiat as it contains non-invertible indecomposable $1$-morphisms 
but it does not contain any non-zero $2$-morphisms from the identity $1$-morphism to any non-invertible
indecomposable $1$-morphism. Therefore adjunction $2$-morphisms for non-invertible
indecomposable $1$-morphisms cannot exist. The same argument also applies to $\Bbbk\cS_{F_n^*}$,
proving claim~\eqref{thmmain.2}.
 
By construction, the $2$-category $\Bbbk\cA_n$ satisfies conditions \eqref{cond.a}, \eqref{cond.b} and 
\eqref{cond.c} from the definition of a finitary $2$-category. Therefore the $2$-category 
$\overline{\Bbbk\cA_n}$ is a finitary $2$-category by Lemma~\ref{lem91}. Let us now check existence 
of adjunction $2$-morphisms. 

Recall that an adjoint to a direct sum of functors is a direct sum of adjoints to components.
Therefore, as $\overline{\Bbbk\cA_n}$ is obtained from $(\cA_n)_{\Bbbk}$ by splitting idempotents in 
$2$-endomorphism rings, it is enough to check that adjunction $2$-morphisms exist in
$(\cA_n)_{\Bbbk}$. Any $1$-morphism in $(\cA_n)_{\Bbbk}$ is, by construction, a direct sum of
$\sigma\in S_n$. Therefore it is enough to check that adjunction $2$-morphisms exist in
$\cA_n$.  In the latter category, each $1$-morphism $\sigma\in S_n$ is invertible and hence 
both left and right adjoint to $\sigma^{-1}$. This implies existence of adjunction $2$-morphisms 
in $\cA_n$. 

The above shows that the $2$-category $\overline{\Bbbk\cA_n}$ is fiat. Similarly one shows that 
the $2$-category $\overline{\Bbbk\cB_n}$ is fiat. This completes the proof.
\end{proof}

\section{Decategorification}\label{s5}

\subsection{Decategorification via Grothendieck group}\label{s5.1}

Let $\cC$ be a finitary $2$-category. A {\em Grothendieck decategorification} $[\cC]$ of $\cC$ is a category 
defined as follows:
\begin{itemize}
\item $[\cC]$ has the same objects as $\cC$.
\item For $\mathtt{i},\mathtt{j}\in \cC$, the set $[\cC](\mathtt{i},\mathtt{j})$ coincides with the
split Grothendieck group $[\cC(\mathtt{i},\mathtt{j})]_{\oplus}$ of the additive category 
$\cC(\mathtt{i},\mathtt{j})$.
\item The identity morphism in $[\cC](\mathtt{i},\mathtt{i})$ is the class of $\mathbbm{1}_{\mathtt{i}}$.
\item Composition in $[\cC]$ is induced from composition of $1$-morphisms in $\cC$.
\end{itemize}
We refer to \cite[Lecture~1]{Maz2} for more details.

For a finitary $2$-category $\cC$, the above allows us to define the {\em decategorification} of 
$\cC$ as the $\mathbb{Z}$-algebra 
\begin{displaymath}
A_{\ccC}:=\bigoplus_{\mathtt{i},\mathtt{j}\in\ccC} [\cC](\mathtt{i},\mathtt{j})
\end{displaymath}
with the induced composition. The algebra $A_{\ccC}$ is positively based in the sense of \cite{KiMa2} with
respect to the basis corresponding to indecomposable $1$-morphisms in $\cC$.

\subsection{Decategorifications of $\overline{\Bbbk\cA_n}$ and $\Bbbk\cS_{IS_n}$}\label{s5.2}

\begin{theorem}\label{thm5}
We have $A_{\overline{\Bbbk\ccA_n}}\cong A_{\Bbbk\ccS_{IS_n}}\cong\mathbb{Z}[IS_n]$.
\end{theorem}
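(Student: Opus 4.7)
The plan splits naturally into the two claims. For $\Bbbk\cS_{IS_n}$, the isomorphism with $\mathbb{Z}[IS_n]$ is essentially tautological: by construction of $\cS_{IS_n}$ each $\mathrm{Hom}$-set has at most one morphism, so every endomorphism algebra of a $1$-morphism in $\Bbbk\cS_{IS_n}$ is $\Bbbk$, every $f\in IS_n$ is already indecomposable (no idempotent splitting is needed), and the Grothendieck group has basis $IS_n$ with multiplication inherited from the monoid.

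For $\overline{\Bbbk\cA_n}$ the core task is to identify its indecomposable $1$-morphisms with $IS_n$ and then to match horizontal composition with multiplication in the M\"obius basis of $\mathbb{Z}[IS_n]$. The first step is to compute $\mathrm{End}(\mathrm{id}_{\mathbf{n}})$: subrelations of $\mathrm{id}_{\mathbf{n}}$ correspond bijectively to subsets $X\subseteq\mathbf{n}$ via $X\mapsto e_X:=\{(x,x):x\in X\}$, with vertical composition given by intersection, so $\mathrm{End}(\mathrm{id}_{\mathbf{n}})\cong \Bbbk[(2^{\mathbf{n}},\cap)]$. This is semisimple with $2^n$ primitive idempotents $\varepsilon^{(X)}:=\sum_{A\subseteq X}(-1)^{|X\setminus A|}e_A$, hence by Subsection~\ref{s3.5} the objects of $\overline{\Bbbk\cA_n}$ are $\mathtt{i}^{(X)}$ for $X\subseteq\mathbf{n}$.

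The second step is the decomposition of a fixed $\sigma\in S_n$ in $\overline{\Bbbk\cA_n}(\mathtt{i}^{(X)},\mathtt{i}^{(Y)})$. Writing $\sigma|_A$ for the subrelation of $\sigma$ supported on $A\subseteq\mathbf{n}$, we again have $\mathrm{End}(\sigma)\cong\Bbbk[(2^{\mathbf{n}},\cap)]$, with primitive idempotents $\sigma_{\{X\}}:=\sum_{A\subseteq X}(-1)^{|X\setminus A|}\sigma|_A$ and characters $\chi_C(\sigma|_A)=[C\subseteq A]$. The direct computation $e_Y\circ_0\sigma|_A\circ_0 e_X=\sigma|_{X\cap A\cap\sigma^{-1}(Y)}$ translates to the rule $\chi_C(\varepsilon^{(Y)}\circ_0(-)\circ_0\varepsilon^{(X)})=\delta_{C,X}\delta_{\sigma(C),Y}\chi_C(-)$, so $\varepsilon^{(Y)}\circ_0 \mathrm{End}(\sigma)\circ_0\varepsilon^{(X)}$ is one-dimensional and spanned by $\sigma_{\{X\}}$ precisely when $Y=\sigma(X)$, and zero otherwise. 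A parallel calculation inside $\mathrm{Hom}(\sigma,\sigma')$ shows that the images of $\sigma$ and $\sigma'$ in $\overline{\Bbbk\cA_n}(\mathtt{i}^{(X)},\mathtt{i}^{(\sigma(X))})$ are isomorphic iff $\sigma|_X=\sigma'|_X$. Consequently indecomposable $1$-morphisms of $\overline{\Bbbk\cA_n}$ are in natural bijection with partial bijections on $\mathbf{n}$, i.e., with elements of $IS_n$.

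Finally I would match the ring structures. From $\tau|_B\circ\sigma|_A=(\tau\sigma)|_{A\cap\sigma^{-1}(B)}$ one reads off the character identity $\chi_D(\tau|_B\circ_0\sigma|_A)=\chi_D(\sigma|_A)\,\chi_{\sigma(D)}(\tau|_B)$, which immediately gives
\begin{displaymath}
\tau_{\{Y\}}\circ_0\sigma_{\{X\}}=\delta_{Y,\sigma(X)}\,(\tau\sigma)_{\{X\}}.
\end{displaymath}
On the side of $\mathbb{Z}[IS_n]$, the M\"obius basis of \cite[Theorem~4.4]{Ste} is $\mu_f:=\sum_{g\leq f}(-1)^{|\mathrm{dom}(f)\setminus\mathrm{dom}(g)|}g$, and a short direct expansion verifies the corresponding rule $\mu_{\tau|_Y}\mu_{\sigma|_X}=\delta_{Y,\sigma(X)}\mu_{(\tau\sigma)|_X}$. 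Sending the indecomposable $(\sigma,X)$ to $\mu_{\sigma|_X}$ therefore yields a ring isomorphism $A_{\overline{\Bbbk\ccA_n}}\cong\mathbb{Z}[IS_n]$. I expect the main obstacle to be keeping signs under control in the horizontal composition of the primitive idempotents $\sigma_{\{X\}}$; working through characters rather than expanding the M\"obius sums directly is what makes this tractable.
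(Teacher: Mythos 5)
Your proposal is correct and follows essentially the same route as the paper: split the primitive idempotents $\varepsilon^{(X)}$ of $\mathrm{End}(\mathbbm{1}_{\mathtt{i}})\cong\Bbbk[(2^{\mathbf{n}},\cap)]$, show that $\varepsilon^{(Y)}\circ_0\sigma\circ_0\varepsilon^{(X)}$ is the M{\"o}bius element attached to $\sigma|_X$ when $Y=\sigma(X)$ and zero otherwise, identify the indecomposable $1$-morphisms with partial bijections, and match horizontal composition with the M{\"o}bius (groupoid) basis multiplication in $\mathbb{Z}[IS_n]$. Your character formalism $\chi_C$ is a tidy repackaging of the inclusion--exclusion computations the paper performs directly, but not a genuinely different argument.
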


\begin{proof}
Indecomposable $1$-morphisms in $\Bbbk\cS_{IS_n}$ correspond exactly to elements of $IS_n$, by construction.
This implies that $A_{\Bbbk\ccS_{IS_n}}\cong\mathbb{Z}[IS_n]$ where an indecomposable $1$-morphism
$\sigma$ on the left hand side is mapped to itself on the right hand side.
So, we only need to prove that $A_{\overline{\Bbbk\ccA_n}}\cong\mathbb{Z}[IS_n]$.

For $\sigma\in IS_n$, set
\begin{equation}\label{eq91}
\underline{\sigma} :=\sum_{\rho\subset \sigma}(-1)^{|\sigma\setminus\rho|}\rho\in\mathbb{Z}[IS_n].
\end{equation}
Then $\{\underline{\sigma}\,:\,\sigma\in IS_n\}$ is a basis in $\mathbb{Z}[IS_n]$ which we call the 
{\em M{\"o}bius basis}, see, for example, \cite[Theorem~4.4]{Ste}.

The endomorphism monoid $\mathrm{End}_{\ccA_n}(\mathrm{id}_{\mathbf{n}})$ is, by construction, canonically
isomorphic to the Boolean $2^{\mathbf{n}}$ of $\mathbf{n}$ with both $\circ_0$ and $\circ_1$ being equal to the
operation on $2^{\mathbf{n}}$ of taking the intersection. We identify elements in 
$\mathrm{End}_{\ccA_n}(\mathrm{id}_{\mathbf{n}})$
and in $2^{\mathbf{n}}$ in the obvious way. With this identification, in the construction of 
$\overline{\Bbbk\cA_n}$, we can take, for $X\subset \mathbf{n}$,
\begin{equation}\label{eq92}
\varepsilon^{(X)}_{\mathtt{i}}=\sum_{Y\subseteq X}(-1)^{|X|-|Y|}Y.
\end{equation}

For $\sigma\in S_n$ and $X,Y\subset \mathbf{n}$, consider the element
\begin{equation}\label{eq93}
\varepsilon^{(Y)}_{\mathtt{i}}\circ_0\sigma\circ_0\varepsilon^{(X)}_{\mathtt{i}}\in
\mathrm{End}_{\Bbbk\cccA_n}(\sigma)
\end{equation}
and write it as a linear combination of subrelations of $\sigma$ (this is the standard basis
in $\mathrm{End}_{\Bbbk\cccA_n}(\sigma)$). A subrelation $\rho\subset \sigma$
may appear in this linear combination with a non-zero coefficient only if $\rho$ consist of pairs
of the form $(y,x)$, where $x\in X$ and $y\in Y$. 

Assume that $\sigma(X)=Y$. Then the relation 
\begin{displaymath}
\rho_{\sigma}=\bigcup_{x\in X}\{(\sigma(x),x)\},
\end{displaymath}
clearly, appears in the linear combination above with coefficient one. Moreover, the idempotent properties
of $\varepsilon^{(X)}_{\mathtt{i}}$ and $\varepsilon^{(Y)}_{\mathtt{i}}$ imply that the element
in \eqref{eq93} is exactly $\underline{\rho_{\sigma}}$.

Assume that $\sigma(X)\neq Y$. Then the inclusion-exclusion formula implies that any 
subrelation of $\sigma$ appears in the linear combination above with coefficient zero.
This means that the $1$-morphism $\sigma\in \overline{\Bbbk\cA_n}(\mathtt{i}^{(Y)},\mathtt{i}^{(X)})$ 
is zero if and only if $\sigma(X)\neq Y$.

If $|X|=|Y|$ and $\sigma,\pi\in S_n$ are such that $\sigma(x)=\pi(x)\in Y$, for all $x\in X$, then
\begin{displaymath}
\underline{\rho_{\sigma}}=\underline{\rho_{\pi}}\in \mathrm{Hom}_{\Bbbk\ccA_n}(\sigma,\pi)\cap 
\mathrm{Hom}_{\Bbbk\ccA_n}(\pi,\sigma)
\end{displaymath}
gives rise to an isomorphism between $\sigma$ and $\pi$ in 
$\overline{\Bbbk\cA_n}(\mathtt{i}^{(Y)},\mathtt{i}^{(X)})$. 
If $\sigma(x)\neq \pi(x)$, for some $x\in X$, then any morphism in $\mathrm{Hom}_{\Bbbk\ccA_n}(\sigma,\pi)$
is a linear combination of relations which are properly contained in both $\rho_{\sigma}$
and $\rho_{\pi}$. Therefore $\sigma$ and $\pi$ are not isomorphic in $\Bbbk\cA_n$.

Consequently, isomorphism classes of 
indecomposable $1$-morphisms in the category $\overline{\Bbbk\cA_n}(\mathtt{i}^{(Y)},\mathtt{i}^{(X)})$ correspond 
precisely to elements in  $IS_n$ with domain $X$ and image $Y$. Composition of these
indecomposable $1$-morphisms is inherited from $S_n$. By comparing formulae~\eqref{eq91} and \eqref{eq92},
we see that composition of $1$-morphisms in $\overline{\Bbbk\cA_n}$ corresponds to multiplication 
of the M{\"o}bius basis elements in $\mathbb{Z}[IS_n]$. This completes the proof of the theorem.
\end{proof}

Theorem~\ref{thm5} allows us to consider $\overline{\Bbbk\cA_n}$ and $\cS_{IS_n}$ as two different
categorifications of $IS_n$. The advantage of $\overline{\Bbbk\cA_n}$ is that this $2$-category is fiat.

The construction we use in our proof of Theorem~\ref{thm5} resembles the partialization construction 
from \cite{KuMa1}.

\subsection{Decategorifications of $\overline{\Bbbk\cB_n}$ and $\Bbbk\cS_{F_n^*}$}\label{s5.3}

\begin{theorem}\label{thm7}
We have $A_{\overline{\Bbbk\ccB_n}}\cong A_{\Bbbk\ccS_{\hspace{-1mm}F_n^*}}\cong\mathbb{Z}[F_n^*]$.
\end{theorem}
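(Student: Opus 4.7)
The approach mirrors the proof of Theorem~\ref{thm5}, with the Boolean lattice $2^{\mathbf{n}}$ replaced by the partition lattice $\Pi_n$ of $\mathbf{n}$, intersections replaced by joins, and the Boolean M\"obius values $(-1)^{|X\setminus Y|}$ replaced by the M\"obius function $\mu$ of $\Pi_n$. The isomorphism $A_{\Bbbk\ccS_{F_n^*}}\cong\mathbb{Z}[F_n^*]$ is immediate, since indecomposable $1$-morphisms in $\Bbbk\cS_{F_n^*}$ are in bijection with elements of $F_n^*$. The remaining task is to establish $A_{\overline{\Bbbk\ccB_n}}\cong\mathbb{Z}[F_n^*]$.

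First I would observe that the monoid $\mathrm{End}_{\ccB_n}(\mathrm{id}_{\mathbf{n}})$ is canonically isomorphic to the partition lattice $\Pi_n$, via the map sending $\bar{\alpha}\in\Pi_n$ with parts $X_k$ to the element $\varepsilon_{\bar{\alpha}}:=\bigsqcup_k(X_k\cup X_k')\in\mathbf{PP}_n$; on this subset of $\mathbf{PP}_n$ both $\circ_0$ and $\circ_1$ coincide with $\vee$, as already noted in the proof of Proposition~\ref{prop4}. In the commutative $\Bbbk$-algebra $\Bbbk[\Pi_n]$ with multiplication $\vee$, the primitive idempotents are then given by M\"obius inversion:
\[
\varepsilon_{\mathtt{i}}^{(\pi)}\;=\;\sum_{\rho\geq\pi}\mu(\pi,\rho)\,\varepsilon_\rho,\qquad\pi\in\Pi_n,
\]
and they are pairwise orthogonal, being dual to the characters $\chi_\tau(\rho)=[\rho\leq\tau]$ of the join-semilattice $\Pi_n$. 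Consequently, objects of $\overline{\Bbbk\cB_n}$ are indexed by partitions $\pi\in\Pi_n$.

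Next, for $\sigma\in F_n^*$ define the M\"obius basis element
\[
\underline{\sigma}\;:=\;\sum_{\rho\in F_n^*,\,\rho\geq\sigma}\mu_{F_n^*}(\sigma,\rho)\,\rho\in\mathbb{Z}[F_n^*].
\]
Writing $\sigma=\tau\varepsilon_\pi$ with $\tau\in S_n$ and $\pi\in\Pi_n$, the upper set of $\sigma$ in $F_n^*$ is order-isomorphic to that of $\pi$ in $\Pi_n$ via $\rho\mapsto\tau\varepsilon_\rho$, whence $\underline{\sigma}=\sum_{\rho\geq\pi}\mu(\pi,\rho)\,\tau\varepsilon_\rho$. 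The crucial computation, paralleling \eqref{eq93}, is then
\[
\varepsilon^{(\pi')}_{\mathtt{i}}\circ_0\tau\circ_0\varepsilon^{(\pi)}_{\mathtt{i}}\;=\;
\begin{cases}\underline{\tau\varepsilon_\pi}&\text{if }\tau(\pi)=\pi',\\ 0&\text{otherwise.}\end{cases}
\]
This rests on the partition-monoid identity $\varepsilon_{\rho'}\,\tau\,\varepsilon_\rho=\tau\,\varepsilon_{\rho\vee\tau^{-1}(\rho')}$ for $\rho,\rho'\in\Pi_n$, verified by direct mini-max computation inside $\mathbf{PP}_n$. After the substitution $\rho''=\tau^{-1}(\rho')$, the double sum defining the left-hand side becomes $\tau\circ_0(\varepsilon^{(\pi)}_{\mathtt{i}}\vee\varepsilon^{(\tau^{-1}(\pi'))}_{\mathtt{i}})$, which by orthogonality of the primitive idempotents vanishes unless $\pi=\tau^{-1}(\pi')$, in which case it collapses to $\tau\circ_0\varepsilon^{(\pi)}_{\mathtt{i}}=\underline{\tau\varepsilon_\pi}$.

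The concluding argument then proceeds as in Theorem~\ref{thm5}: isomorphism classes of indecomposable $1$-morphisms in $\overline{\Bbbk\cB_n}(\mathtt{i}^{(\pi')},\mathtt{i}^{(\pi)})$ correspond bijectively to those $\sigma\in F_n^*$ of the form $\tau\varepsilon_\pi$ with $\tau(\pi)=\pi'$, i.e., to bijections of quotients $\mathbf{n}/\pi\to\mathbf{n}/\pi'$; two $1$-morphisms $\tau_1,\tau_2\in S_n$ become isomorphic in this hom-category precisely when $\tau_1\varepsilon_\pi=\tau_2\varepsilon_\pi$. Composition of such $1$-morphisms is inherited from $S_n$ and, by comparison of the formulae for $\underline{\sigma}$ and $\varepsilon^{(\pi)}_{\mathtt{i}}$, matches multiplication of M\"obius basis elements in $\mathbb{Z}[F_n^*]$. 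The main technical obstacle should be the explicit verification of $\varepsilon_{\rho'}\tau\varepsilon_\rho=\tau\varepsilon_{\rho\vee\tau^{-1}(\rho')}$ inside $\mathbf{PP}_n$; once this identity is in hand, the M\"obius-inversion machinery developed for $IS_n$ transfers wholesale to the partition-lattice setting, with joins in $\Pi_n$ playing the role of intersections in $2^{\mathbf{n}}$.
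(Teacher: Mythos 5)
Your proposal is correct and follows essentially the same route as the paper: the paper's own proof of Theorem~\ref{thm7} consists precisely of the remark that one replaces the Boolean lattice and its M\"obius function by the partition lattice of $\mathbf{n}$ and its M\"obius function and then repeats the proof of Theorem~\ref{thm5} mutatis mutandis. What you have written is a correct and rather more detailed execution of exactly that transfer, including the key identity $\varepsilon_{\rho'}\tau\varepsilon_{\rho}=\tau\varepsilon_{\rho\vee\tau^{-1}(\rho')}$ and the orthogonality argument, which the paper leaves implicit.
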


\begin{proof}
Using the M{\"o}bius function for the poset of all quotients of $\mathbf{n}$ with respect to $\leq$
(see, for example, \cite[Example~1]{Rot}),
Theorem~\ref{thm7} is proved mutatis mutandis Theorem~\ref{thm5}.
\end{proof}

Theorem~\ref{thm7} allows us to consider $\overline{\Bbbk\cB_n}$ and $\cS_{F_n^*}$ as two different
categorifications of $F_n^*$. The advantage of $\overline{\Bbbk\cB_n}$ is that this $2$-category is fiat.

The immediately following examples are in low rank, but show that these
constructions can be worked with at the concrete as well as the abstract level.
In particular, they illustrate the difference between the two constructions.

\section{Examples for $n=2$}\label{s6}

\subsection{Example of $F^*_2$}\label{s6.1}

The monoid $F^*_2$ consists of three elements which we write as follows:
\begin{displaymath}
\epsilon:=\left(\begin{array}{cc}1&2\\1&2\end{array}\right),\quad
\sigma:=\left(\begin{array}{cc}1&2\\2&1\end{array}\right),\quad
\tau:=\left(\begin{array}{c}\{1,2\}\\\{1,2\}\end{array}\right).
\end{displaymath}
These are identified with the following partitions of $\{1,2,1',2'\}$:
\begin{displaymath}
\epsilon\leftrightarrow \big\{\{1,1'\},\{2,2'\}\big\},\quad 
\sigma\leftrightarrow \big\{\{1,2'\},\{2,1'\}\big\},\quad 
\tau\leftrightarrow \big\{\{1,2,1',2'\}\big\}.
\end{displaymath}
The symmetric group $S_2$ consists of $\epsilon$ and $\sigma$.

Here is the table showing all $2$-morphisms in $\cB_2$ from $x$ to $y$:
\begin{displaymath}
\begin{array}{c||c|c|c}
y\setminus x & \epsilon &\sigma & \tau\\
\hline\hline
\epsilon& \epsilon,\tau & \tau & \tau\\
\hline
\sigma&\tau & \sigma,\tau& \tau\\
\hline
\tau&\tau&\tau&\tau
\end{array}
\end{displaymath}
The $2$-endomorphism algebra of both $\epsilon$ and $\sigma$ in $(\cB_2)_{\Bbbk}$ is isomorphic
to $\Bbbk\oplus\Bbbk$ where the primitive idempotents are $\tau$ and $\epsilon-\tau$,
in the case of  $\epsilon$, and $\tau$ and $\sigma-\tau$, in the case of  $\sigma$.

The $2$-category $\Bbbk\cB_2$ has three isomorphism classes of indecomposable $1$-morphisms, namely
$\tau$, $\epsilon-\tau$ and $\sigma-\tau$. 

The $2$-category $\overline{\Bbbk\cB_2}$ has two objects, $\mathtt{i}_{\tau}$ and 
$\mathtt{i}_{\epsilon-\tau}$. The indecomposable $1$-morphisms in $\Bbbk\cB_2$
give indecomposable $1$-morphisms in $\overline{\Bbbk\cB_2}$ from $x$ to $y$ as follows:
\begin{displaymath}
\begin{array}{c||c|c}
y\setminus x & \mathtt{i}_{\epsilon-\tau} &\mathtt{i}_{\tau} \\
\hline\hline
\mathtt{i}_{\epsilon-\tau}& \epsilon-\tau,\sigma-\tau & \varnothing\\
\hline
\mathtt{i}_{\tau}&\varnothing &  \tau\\
\end{array}
\end{displaymath}

\subsection{Example of $IS_2$}\label{s6.2}

We write elements of $IS_2$ as follows:
\begin{gather*}
\epsilon:=\left(\begin{array}{cc}1&2\\1&2\end{array}\right),\quad
\sigma:=\left(\begin{array}{cc}1&2\\2&1\end{array}\right),\quad
\tau:=\left(\begin{array}{cc}1&2\\\varnothing&\varnothing\end{array}\right),\\
\alpha:=\left(\begin{array}{cc}1&2\\1&\varnothing\end{array}\right),\quad
\beta:=\left(\begin{array}{cc}1&2\\2&\varnothing\end{array}\right),\quad
\gamma:=\left(\begin{array}{cc}1&2\\\varnothing&1\end{array}\right),\quad
\delta:=\left(\begin{array}{cc}1&2\\\varnothing&2\end{array}\right).
\end{gather*}
The symmetric group $S_2$ consists of $\epsilon$ and $\sigma$.

Here is the table showing all $2$-morphisms in $\cA_2$ from $x$ to $y$:
\begin{displaymath}
\begin{array}{c||c|c|c|c|c|c|c}
y\setminus x & \epsilon &\sigma & \tau& \alpha &\beta &\gamma & \delta\\
\hline\hline
\epsilon& \epsilon,\alpha,\delta,\tau & \tau &\tau &\alpha,\tau &\tau &\tau &\delta,\tau \\
\hline
\sigma&\tau & \sigma,\beta,\gamma,\tau &\tau &\tau &\beta,\tau &\gamma,\tau &\tau \\
\hline
\tau&\tau & \tau &\tau &\tau &\tau &\tau &\tau \\
\hline
\alpha&\alpha,\tau & \tau &\tau &\alpha,\tau &\tau &\tau &\tau \\
\hline
\beta&\tau & \beta,\tau &\tau &\tau &\beta,\tau &\tau &\tau \\
\hline
\gamma&\tau & \beta,\tau &\tau &\tau &\tau &\gamma,\tau &\tau \\
\hline
\delta&\alpha,\tau & \tau &\tau &\tau &\tau &\tau &\delta,\tau \\
\hline
\end{array}
\end{displaymath}
The $2$-endomorphism algebra of  $\epsilon$ in $(\cA_2)_{\Bbbk}$ is isomorphic
to $\Bbbk\oplus\Bbbk\oplus\Bbbk\oplus\Bbbk$ where the primitive idempotents are 
$\tau$, $\alpha-\tau$, $\delta-\tau$ and $\epsilon-\alpha-\delta+\tau$.
Similarly one can describe the $2$-endomorphism algebra of  $\sigma$ in $(\cA_2)_{\Bbbk}$.
The $2$-endomorphism algebra of $\alpha$ in $(\cA_2)_{\Bbbk}$ is isomorphic
to $\Bbbk\oplus\Bbbk$ where the primitive idempotents are 
$\tau$ and $\alpha-\tau$. Similarly one can describe the $2$-endomorphism algebras of  $\beta$,
$\gamma$ and $\delta$.

The $2$-category $\Bbbk\cA_2$ has seven isomorphism classes of indecomposable $1$-morphisms, namely
\begin{displaymath}
\tau,\,\,\,  \alpha-\tau,\,\,\, \beta-\tau,\,\,\, \gamma-\tau,\,\,\, \delta-\tau,\,\,\,
\epsilon-\alpha-\delta+\tau,\,\,\,\sigma-\beta-\gamma+\tau.
\end{displaymath}

The $2$-category $\overline{\Bbbk\cA_2}$ has four objects, $\mathtt{i}_{\tau}$, 
$\mathtt{i}_{\alpha-\tau}$, $\mathtt{i}_{\delta-\tau}$
and $\mathtt{i}_{\epsilon-\alpha-\delta+\tau}$. The indecomposable $1$-morphisms in $\Bbbk\cA_2$
give indecomposable $1$-morphisms in $\overline{\Bbbk\cA_2}$ from $x$ to $y$ as follows:
\begin{displaymath}
\begin{array}{c||c|c|c|c}
y\setminus x & \mathtt{i}_{\epsilon-\alpha-\delta+\tau} &
\mathtt{i}_{\alpha-\tau}&\mathtt{i}_{\delta-\tau}&\mathtt{i}_{\tau} \\
\hline\hline
\mathtt{i}_{\epsilon-\alpha-\delta+\tau}& 
\epsilon-\alpha-\delta+\tau,\sigma-\beta-\gamma+\tau & \varnothing& \varnothing& \varnothing\\
\hline
\mathtt{i}_{\alpha-\tau}&\varnothing & \alpha-\tau& \gamma-\tau&  \varnothing\\
\hline
\mathtt{i}_{\delta-\tau}&\varnothing & \beta-\tau& \delta-\tau&  \varnothing\\
\hline
\mathtt{i}_{\tau}&\varnothing & \varnothing& \varnothing&  \tau\\
\end{array}
\end{displaymath}
This table can be compared with \cite[Figure~1]{MS}.

\vspace{0.3cm}

\noindent
P.~M.: Department of Pure Mathematics, University of Leeds, Leeds, 
LS2 9JT, UK, e-mail: {\tt ppmartin\symbol{64}maths.leeds.ac.uk }
\vspace{0.3cm}

\noindent
V.~M: Department of Mathematics, Uppsala University, Box. 480,
SE-75106, Uppsala, SWEDEN, email: {\tt mazor\symbol{64}math.uu.se}


\begin{thebibliography}{99999999}
\bibitem[BBFW]{Baez} J.~Baez, A.~Baratin, L.~Freidel, D.~Wise.
Infinite-dimensional representations of $2$-groups. 
Mem. Amer. Math. Soc. {\bf 219} (2012), no. 1032, vi+120 pp.
\bibitem[BFK]{BFK} J.~Bernstein, I.~Frenkel, M.~Khovanov. 
A categorification of the Temperley-Lieb algebra and Schur 
quotients of $U(\mathfrak{sl}_2)$ via projective and Zuckerman functors. 
Selecta Math. (N.S.) {\bf 5} (1999), no. 2, 199--241.
\bibitem[CM]{CM} A.~Chan, V.~Mazorchuk. Diagrams and discrete extensions for finitary 
$2$-representations. Preprint arXiv:1601.00080
\bibitem[CR]{CR} J.~Chuang, R.~Rouquier. Derived equivalences for 
symmetric groups and $\mathfrak{sl}_2$-ca\-te\-go\-ri\-fi\-ca\-ti\-on. 
Ann. of Math.  (2) {\bf 167} (2008), no. 1, 245--298. 
\bibitem[EH]{EH} B.~Eckmann, P.~Hilton. Group-like structures in general 
categories. I. Multiplications and comultiplications. Math. Ann. {\bf 144/145} (1962), 227--255.
\bibitem[El]{Elgueta} J.~Elgueta. Representation theory of $2$-groups on {K}apranov and 
{V}oevodsky's 2-vector spaces. Adv. Math. {\bf 213} (2007), 53-92.
\bibitem[FL]{FL} D.~FitzGerald, J.~Leech. Dual symmetric inverse monoids and representation theory. 
J. Austral. Math. Soc. Ser. A {\bf 64} (1998), no. 3, 345--367.
\bibitem[FB]{ForresterBarker} M.~Forrester-Barker. Representations of crossed modules and 
$\mathrm{Cat}^1$-groups. Ph.D. Thesis, University of Wales, Bangor, 2003.
\bibitem[Fo]{Fo} L.~Forsberg. Multisemigroups with multiplicities and complete ordered semi-rings.
Preprint arXiv:1510.01478. To appear in  Beitr{\"a}ge zur Algebra und Geometrie.
\bibitem[GaMa]{GM} O.~Ganyushkin, V.~Mazorchuk. Classical finite transformation semigroups. 
An introduction. Algebra and Applications, \textbf{9}. Springer-Verlag London, Ltd., London, 2009.
% \bibitem[GMS]{GMS} O.~Ganyushkin, V.~Mazorchuk and B.~Steinberg. On
%   the irreducible representations of a finite semigroup, Proc. Amer. Math. Soc.
% {\bf 137} (2009), 3585-3592.
\bibitem[GrMa1]{GrMa1}  A.-L.~Grensing, V.~Mazorchuk. Categorification of the Catalan monoid. 
Semigroup Forum \textbf{89} (2014), no. 1, 155--168.
\bibitem[GrMa2]{GrMa} A.-L.~Grensing, V.~Mazorchuk.  Finitary $2$-categories associated with 
dual projection functors. Preprint arXiv:1501.00095. To appear in Commun. Contemp. Math.
\bibitem[Jo]{Jo} V.~Jones. The Potts model and the symmetric group. In: 
\emph{Subfactors: Proceedings of the Taniguchi Symposium on Operator Algebras (Kyuzeso, 
1993)}, River Edge, NJ, World Sci. Publishing, 1994, pp. 259--267.
\bibitem[Ka]{Kapranov} K.~Kapranov, V.~ Voevodsky. $2$-categories and Zamolodchikov 
tetrahedra equations. Algebraic groups and their generalizations: quantum and infinite-dimensional 
methods (University Park, PA, 1991), 177--259, Proc. Sympos. Pure Math., {\bf 56}, 
Part 2, Amer. Math. Soc., Providence, RI, 1994.
\bibitem[Kh]{Kh} M.~Khovanov. A categorification of the Jones polynomial. Duke Math. J. {\bf 101} 
(2000), no. 3, 359--426.
\bibitem[KMMZ]{KMMZ} T.~Kildetoft, M.~Mackaay, V.~Mazorchuk, J.~Zimmermann. Simple transitive 
$2$-representations of small quotients of Soergel bimodules. Preprint arXiv:1605.01373 
\bibitem[KiMa1]{KiMa} T.~Kildetoft, V.~Mazorchuk. Parabolic projective functors in type $A$. 
Adv. Math. {\bf 301} (2016), 785--803.
\bibitem[KiMa2]{KiMa2} T.~Kildetoft, V.~Mazorchuk. Special modules over positively based algebras.
Doc. Math. {\bf 21 } (2016), 1171--1192.
\bibitem[Ko]{Ko} J.~Kock. Note on commutativity in double semigroups and two-fold 
monoidal categories. J. Homotopy Relat. Struct. {\bf 2} (2007), no. 2, 217--228.
\bibitem[KuMa1]{KuMa1} G.~Kudryavtseva, V.~Mazorchuk. Partialization of categories and 
inverse braid-permutation monoids. Internat. J. Algebra Comput. {\bf 18} (2008), no. 6, 989--1017.
\bibitem[KuMa2]{KuMa} G.~Kudryavtseva, V.~Mazorchuk. On multisemigroups.  Port. Math. {\bf 72} 
(2015), no. 1, 47--80.  
\bibitem[Le]{Le} T.~Leinster. Basic Bicategories. Preprint arXiv:math/9810017
\bibitem[MaMa]{MaMa} M.~Mackaay, V.~Mazorchuk. Simple transitive $2$-representations for 
some $2$-subcategories of Soergel bimodules. 
J. Pure Appl. Algebra {\bf 221} (2017), no. 3, 565--587. 
\bibitem[Mac]{Mac} S.~Mac Lane. Categories for the working mathematician. 
Second edition. Graduate Texts in Mathematics {\bf 5}. Springer-Verlag, New York, 1998.
\bibitem[Mar1]{Mar1} P.~P. Martin. Potts models and related problems in
statistical mechanics. World Scientific, Singapore, 1991.
\bibitem[Mar2]{Mar2} P.~P. Martin. Temperley-{L}ieb algebras for non-planar statistical 
mechanics --- the  partition algebra construction. Journal of Knot Theory and its 
Ramifications \textbf{3} (1994), no.~1, 51--82.
% \bibitem[Mar2]{Mar2} P.~P. Martin. The structure of the partition algebras. 
% J. Algebra {\bf 183} (1996), no. 2, 319--358. 
% \bibitem[Maz1]{Maz1} V. Mazorchuk. On the structure of Brauer semigroup and its partial
% analogue. Problems in Algebra \textbf{13} (1998), Gomel: University Press, 29--45. 
\bibitem[Maz1]{Maz} V. Mazorchuk. Endomorphisms of $\mathfrak{B}_n$, 
$\mathcal{P}\mathfrak{B}_n$  and $\mathfrak{C}_n$. Comm. Algebra \textbf{30} 
(2002), no.~7, 3489--3513. 
\bibitem[Maz2]{Maz2} V. Mazorchuk. Lectures on algebraic categorification. QGM Master 
Class Series. European Mathematical Society (EMS), Z{\"u}rich, 2012. x+119 pp.
\bibitem[MM1]{MM1} V.~Mazorchuk, V.~Miemietz. Cell $2$-representations of finitary
$2$-categories. Compositio Math. {\bf 147} (2011), 1519--1545.
\bibitem[MM2]{MM2} V.~Mazorchuk, V.~Miemietz. Additive versus abelian $2$-representations of 
fiat $2$-ca\-te\-go\-ri\-es. Moscow Math. J. {\bf 14} (2014), no. 3, 595--615.
\bibitem[MM3]{MM3} V. Mazorchuk, V. Miemietz. Endmorphisms of cell 2-representations.
Int. Math. Res. Notes, Vol. {\bf 2016}, No. 24, 7471--7498.
\bibitem[MM4]{MM4} V.~Mazorchuk, V.~Miemietz. Morita theory for finitary $2$-categories. 
Quantum Topol. {\bf 7} (2016), no. 1, 1--28.
\bibitem[MM5]{MM5} V.~Mazorchuk, V.~Miemietz. Transitive $2$-representations of finitary $2$-categories. 
Trans. Amer. Math. Soc. {\bf 368} (2016), no. 11, 7623--7644.
\bibitem[MM6]{MM6} V.~Mazorchuk, V.~Miemietz. Isotypic faithful $2$-representations of 
$\mathcal{J}$-simple fiat $2$-categories. Math. Z. {\bf 282} (2016), no. 1-2, 411--434.
\bibitem[MS]{MS} V.~Mazorchuk, C.~Stroppel. $G(l,k,d)$-modules via groupoids.
J. Algebraic Combin. {\bf 43} (2016), no. 1, 11--32. 
\bibitem[MZ]{MZ} V.~Mazorchuk, X.~Zhang. Simple transitive $2$-representations for two 
non-fiat $2$-categories of projective functors. Preprint arXiv:1601.00097
\bibitem[Pf]{Pfeiffer} H.~Pfeiffer. $2$-Groups, trialgebras and their {H}opf categories 
of representations. Adv. Math. {\bf 212} (2007), 62-108.
\bibitem[Ri]{Ri}  C.~M.~Ringel. Tame algebras and integral quadratic forms. 
Lecture Notes in Mathematics {\bf 1099}. Springer-Verlag, Berlin, 1984. 
\bibitem[Rot]{Rot} G.-C.~Rota. On the foundations of combinatorial theory. I. 
Theory of M{\"o}bius functions. Z. Wahrscheinlichkeitstheorie und Verw. Gebiete {\bf 2} (1964), 340--368.
\bibitem[Ro1]{Ro1} R.~Rouquier. $2$-Kac-Moody algebras. Preprint arXiv:0812.5023. 
\bibitem[Ro2]{Ro2} R.~Rouquier. Quiver Hecke algebras and $2$-Lie algebras. 
Algebra Colloquium {\bf 19} (2012), 359--410.
\bibitem[Ste]{Ste} B.~Steinberg. M{\"o}bius functions and semigroup representation theory. 
J. Combin. Theory Ser. A {\bf 113} (2006), no. 5, 866--881. 
\bibitem[Str]{St} C.~Stroppel. Categorification of the Temperley-Lieb 
category, tangles, and cobordisms via projective functors. Duke Math. 
J. {\bf 126} (2005), no. 3, 547--596.  
\bibitem[Xa]{Xa} Q.~Xantcha. Gabriel $2$-Quivers for Finitary $2$-Categories.
J. Lond. Math. Soc. (2) {\bf 92} (2015), no. 3, 615--632. 
\bibitem[Zh1]{Zh1} X.~Zhang. Duflo involutions for $2$-categories associated to tree quivers.
J. Algebra Appl. {\bf 15} (2016), no. 3, 1650041, 25 pp. 
\bibitem[Zh2]{Zh2} X.~Zhang. Simple transitive $2$-representations and Drinfeld center for 
some finitary $2$-categories. Preprint  arXiv:1506.02402. To appear in J. Pure Appl. Algebra.
\bibitem[Zi]{Zi} J.~Zimmermann. Simple transitive $2$-representations of Soergel bimodules in type $B_2$.
J. Pure Appl. Algebra {\bf 221} (2017), no. 3, 666--690.
\end{thebibliography}
\end{document}